\numberwithin{equation}{section}
\newtheorem{theorem}{Theorem}[section]
\newtheorem{proposition}{Proposition}[section]
\newtheorem{lemma}{Lemma}[section]
\newtheorem{remark}{Remark}[section]
\newtheorem{corollary}{Corollary}[section]
\theoremstyle{definition}
\renewcommand{\epsilon}{\eps}
\renewcommand{\i}{{\rm i}}
\newcommand{\C}{{\mathscr A}}
\newcommand{\R}{{\mathbb R}}
\newcommand{\mS}{{\mathbb S}}
\newcommand{\Z}{{\mathbb Z}}
\newcommand{\eps}{\varepsilon}
\newcommand{\pnorm}[2][]{\if #1'' \left|#2\right|_p \else \left|#2\right|_{#1} \fi}
\newcommand{\loc}{{\rm loc}}
\newcommand{\rr}{\tau}
\renewcommand{\theta}{\vartheta}
\def\Xint#1{\mathchoice
	{\XXint\displaystyle\textstyle{#1}}%
	{\XXint\textstyle\scriptstyle{#1}}%
	{\XXint\scriptstyle\scriptscriptstyle{#1}}%
	{\XXint\scriptscriptstyle\scriptscriptstyle{#1}}%
	\!\int}
\def\XXint#1#2#3{{\setbox0=\hbox{$#1{#2#3}{\int}$ }
		\vcenter{\hbox{$#2#3$ }}\kern-.6\wd0}}
\def\dashint{\Xint-}
\DeclareMathOperator{\supp}{supp}
\title{On Hardy and Caffarelli-Kohn-Nirenberg inequalities}
\author[H.-M. Nguyen]{Hoai-Minh Nguyen}
\author[M.\ Squassina]{Marco Squassina}
\address[H.-M. Nguyen]{Department of Mathematics \newline\indent
	EPFL SB CAMA \newline\indent
	Station 8 CH-1015 Lausanne, Switzerland}
\email{hoai-minh.nguyen@epfl.ch}
\address[M.\ Squassina]{Dipartimento di Matematica e Fisica \newline\indent
	Universit\`a Cattolica del Sacro Cuore \newline\indent
	Via dei Musei 41, I-25121 Brescia, Italy}
\email{marco.squassina@unicatt.it}
\thanks{The second author is member of {\em Gruppo Nazionale per l'Analisi Ma\-te\-ma\-ti\-ca, la Probabilit\`a e le loro Applicazioni} (GNAMPA) of the {\em Istituto Nazionale di Alta Matematica} (INdAM). Part  of  this  paper  was  written  during  a  visit  of  M.S.  in
Lausanne in June 2017.  The hosting institution is gratefully acknowledged.}
\subjclass[2010]{46E35, 39B62, 49A50}
\keywords{Characterization of Sobolev spaces, Hardy and Caffarelli-Kohn-Nirenberg inequality.}
\begin{document}

\begin{abstract}
We establish improved versions of the 
Hardy and Caffarelli-Kohn-Nirenberg inequalities by replacing the standard
Dirichlet energy with some nonlocal nonconvex functionals which have
been involved in estimates for the topological degree of  continuous maps from a sphere into itself and characterizations of Sobolev spaces. 
\end{abstract}
\maketitle

\begin{center}
	\begin{minipage}{9.4cm}
		\small
		\tableofcontents
	\end{minipage}
\end{center}

\medskip

\section{Introduction}
In many branches of mathematical physics, harmonic and stochastic analysis, the 
classical {\em Hardy inequality} plays a central role. It states that, if $1\leq p<d$,
\begin{equation*}
\left(\frac{d-p}{p}\right)^p\int_{\R^d}\frac{|u|^p}{|x|^p} dx\leq \int_{\R^d}|\nabla u|^p dx,
\end{equation*}
for every $u\in C^1_c(\R^d)$ with optimal constant which, contrary to the Sobolev inequality, is never attained.  
 Another class of relevant inequalities is given by the so called {\em Caffarelli-Kohn-Nirenberg inequalities} \cite{CKN,CKN-original}.  Precisely, let $p \ge 1$, $q \ge 1$, $\rr>0$, $0 <   a \le   1$, $\alpha, \, \beta, \, \gamma \in \R$ be such that 
\begin{equation}\label{totototo}
\frac{1}{\rr} + \frac{\gamma}{d}, \quad  \frac{1}{p} + \frac{\alpha}{d}, \quad  \frac{1}{q} + \frac{\beta}{d}  > 0,
\end{equation}
\begin{equation*}
\frac{1}{\rr} + \frac{\gamma}{d} = a \left(\frac{1}{p} + \frac{\alpha - 1}{d} \right) + (1-a) \left( \frac{1}{q} + \frac{\beta}{d} \right), 
\end{equation*}
and, with $\gamma = a \sigma + (1 -a) \beta$, 
\begin{equation*}
0 \le  \alpha - \sigma
\end{equation*}
and 
\begin{equation*}
\alpha - \sigma \le  1 \quad  \mbox{ if }  \quad \frac{1}{\rr} + \frac{\gamma}{d}  = \frac{1}{p} + \frac{\alpha - 1}{d}. 
\end{equation*}
Then, for every $u \in C^1_{c}(\R^d)$,
\begin{equation*}
\| |x|^\gamma u   \|_{L^\rr(\R^d)} \le C \| |x|^\alpha \nabla u \|_{L^p(\R^d)}^{a} \| |x|^\beta u   \|_{L^q(\R^d)}^{(1-a)},
\end{equation*}
for some positive constant $C$ independent of $u$.  This inequality has been an object of a large amount of improvement and extensions to more general frameworks. 


\medskip 
In the non-local case, 
it was shown in \cite{FS, MS} that  there exists $C>0$, independent of $0<\delta<1$, such that 
\begin{equation}
\label{sobhardypfin}
C\int_{\R^d}\frac{|u(x)|^{p}}{|x|^{p\delta}}dx
\leq J_\delta(u),
\end{equation}
for all $u\in C^1_c(\R^d)$, where 
$$
J_\delta(u):=(1-\delta)\iint_{\mathbb{R}^{2d}}\frac{|u(x)-u(y)|^p}{|x-y|^{d+p\delta}}dxdy.
$$ 
In light of the results of Bourgain, Brezis, and Mironescu \cite{bourg,bourg2} and an refinement of Davila \cite{Davila}, it holds
\begin{equation*}
\lim_{\delta \searrow 0} J_\delta(u)= K_{d, p}  \int_{\R^d} |\nabla u|^pdx,\quad  \mbox{ for } u\in W^{1, p}(\R^d),  \qquad
K_{d, p} : = \frac{1}{p}\int_{\mS^{d-1}} |\boldsymbol{e} \cdot \sigma|^p \, d \sigma,
\end{equation*}
 for some $\boldsymbol{e} \in \mS^{d-1}$, being $\mS^{d-1}$ the unit sphere in $\R^d$. 
 This
 allows to recover the classical Hardy inequality from  \eqref{sobhardypfin} by letting
 $\delta \searrow 0$. Various problems related to $J_\delta$ are considered in \cite{bre, BHN, BHN2, BHN3, ponce, PS}.  The full range of Caffarelli-Kohn-Nirenberg inequalities and their variants were established in \cite{NgS3} (see  \cite{f-CKN} for partial results in the case $a=1$). 

\medskip 

Set, for $p \ge 1$, $\Omega$ a measurable set of $\R^d$, and $u \in L^1_{\loc}(\Omega)$,  
\begin{equation*}
I_{\delta}(u, \Omega) := \mathop{\int_{\Omega} \int_{\Omega}}_{\{|u(x) - u(y)| > \delta \}} \frac{\delta^p}{|x - y|^{d + p}} \, dx \, dy.
\end{equation*}
In the case, $\Omega = \R^d$, we simply denote $I_\delta(u, \R^d)$ by $I_{\delta} (u)$. The quantity $I_\delta$  with $p=d$ has its roots in estimates for the topological degree of continuous maps from a sphere into itself  in \cite{BBNg1, Ngdegree1}. This also appears in characterizations of Sobolev spaces \cite{BourNg, nguyen06, NgSob2, BHN, BHN-0} and related contexts \cite{BrezisNguyen,BHN, BHN-0,  NgGammaCRAS, NgGamma,  Ng11, NgS1, NgS2}.  It is known that (see \cite[Theorem 2]{nguyen06} and \cite[Proposition 1]{BHN}), for $p \ge 1$, 
\begin{equation}\label{Idelta-p1}
\lim_{\delta \searrow 0}I_\delta (u) = 
K_{d, p} \int_{\R^d} |\nabla u|^p \, dx,\quad \mbox{ for } u \in  C^1_{c}(\R^d) \; \footnote{In the case $p>1$, one can take $u \in W^{1, p}(\R^d)$ in \eqref{Idelta-p1}. Nevertheless, \eqref{Idelta-p1} does not hold for $u \in W^{1, 1}(\R^d)$ in the case $p=1$. An example for this is due to Ponce presented in  \cite{nguyen06}.}
\end{equation}
and, for $p >  1$,  
\begin{equation}\label{Idelta-p2}
I_\delta (u) \le C_{d, p} \int_{\R^d} |\nabla u|^p \, dx, \quad
\mbox{ for } u \in W^{1, p} (\R^d), 
\end{equation}
for some positive constant $C_{d, p}$ independent of $u$.

\medskip 
The aim of this paper is to get improved versions of the local Hardy and Caffarelli-Kohn-Nirenberg type inequalities and their variants which involve nonlinear nonlocal nonconvex energies $I_\delta(u)$ and its related quantities. In what follows for $R> 0$, $B_R$ denotes the open ball of $\R^d$ centered at the origin of radius $r$.   Our first main result concerning Hardy's inequality is:

\begin{theorem}[Improved Hardy inequality]
	\label{thm-Hardy} Let $d \ge 1$, $p \ge 1$, $0< r < R$,  and $u \in L^p(\R^d)$. We have
\begin{enumerate}
\item[i)] if $1 \le p < d$ and $\supp u \subset B_R$,   then 
\begin{equation*}
\int_{\R^d} \frac{|u(x)|^p}{|x|^p} \, dx \le C \left(I_\delta(u) + R^{d-p} \delta^p  \right), 
\end{equation*}

\item[ii)] if $p > d$ and $\supp u \subset \R^d \setminus B_r$, then
\begin{equation*}
\int_{\R^d} \frac{|u(x)|^p}{|x|^p} \, dx \le C \left(I_\delta(u) + r^{d-p} \delta^p  \right), 
\end{equation*}

\item[iii)] if $p = d \ge 2$ and $\supp u \subset B_R$, then
\begin{equation*}
\int_{\R^d \setminus B_r} \frac{|u(x)|^d}{|x|^d \ln^d (2R/ |x|)} \, dx \le C \left(I_\delta(u) +   \ln (2R/ r) \delta^d  \right), 
\end{equation*}

\item[iv)] if $p = d \ge 2$ and $\supp u \subset \R^d \setminus B_r$, then
\begin{equation*}
\int_{B_R} \frac{|u(x)|^d}{|x|^d \ln^d (2|x|/r)} \, dx \le C \left(I_\delta(u) +   \ln (2R/ r) \delta^d  \right),  
\end{equation*}
 
\end{enumerate}
where  $C$  denotes a positive constant depending only on $p$ and $d$. 
 

\end{theorem}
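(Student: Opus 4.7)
The plan is to reduce to non-negative $u$ (using the observation that $||u(x)|-|u(y)||\le|u(x)-u(y)|$, whence $I_\delta(|u|)\le I_\delta(u)$), and then to analyze $u$ via a layer-cake decomposition against the nested level sets $A_k=\{|u|>k\delta\}$, $k\ge 0$.

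Focusing on case (i), I would first split
\[
\int_{\R^d}\frac{|u|^p}{|x|^p}\,dx \le 2^p\int\frac{(|u|\wedge\delta)^p}{|x|^p}\,dx+2^p\int\frac{((|u|-\delta)_+)^p}{|x|^p}\,dx.
\]
The low-level piece is at most $2^p\delta^p\int_{B_R}|x|^{-p}\,dx=CR^{d-p}\delta^p$ (valid because $p<d$), which matches the error term exactly. For the high-level piece I would use the pointwise bound $((|u|-\delta)_+)^p\le C\delta^p\sum_{k\ge 1}k^{p-1}\mathbf 1_{A_k}$, reducing the theorem to establishing
\[
\delta^p\sum_{k\ge 1}k^{p-1}\int_{A_k}\frac{dx}{|x|^p}\le C\,I_\delta(u).
\]

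This final estimate is the crux, and I expect it to be the main obstacle. The natural route is a Poincaré/pairing argument: for each $x\in A_k$ I want to find many partners $y$ such that $|u(x)-u(y)|>\delta$ and $|x-y|\lesssim|x|$, so the pair contributes $\delta^p/|x-y|^{d+p}\gtrsim\delta^p|x|^{-d-p}$ to $I_\delta(u)$; integrating over $y$ in a dyadic annulus of radius $|x|$ produces one factor of $|x|^{-p}$. Since $x\in A_k$ means $|u(x)|$ exceeds $k$ levels of width $\delta$, one should be able to choose, for each intermediate level $\ell\in\{0,\dots,k-1\}$, a spatial companion $y_\ell$ with $|u(y_\ell)|\le\ell\delta$, yielding the multiplicative factor $k^{p-1}$ after careful bookkeeping. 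Making the partner sets essentially disjoint (so $I_\delta$ is not overcounted) is the delicate point; I would implement this via a dyadic decomposition in $|x|$ combined with a pigeonhole on the density of the $A_\ell$'s on each annulus.

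For case (ii), since $p>d$, I would apply the Kelvin-type inversion $x\mapsto x^*=r^2x/|x|^2$, which exchanges the exterior of $B_r$ with the interior and transforms the exponents so that the role of $p<d$ in case (i) is swapped with $p>d$; the quantity $I_\delta$ transforms compatibly up to the Jacobian $|x|^{-2d}$, and the error $R^{d-p}\delta^p$ becomes $r^{d-p}\delta^p$. For cases (iii) and (iv), i.e.\ the borderline $p=d\ge 2$, I would run the same layer-cake scheme but with the log-weight $|x|^{-d}\ln^{-d}(2R/|x|)$ (resp.\ $|x|^{-d}\ln^{-d}(2|x|/r)$) in place of $|x|^{-p}$. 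The correct error term $\ln(2R/r)\delta^d$ arises because the low-level contribution is controlled by $\delta^d\int_{B_R\setminus B_r}|x|^{-d}\,dx=c\delta^d\ln(R/r)$, while the ``high-level'' lemma of the previous paragraph is adapted to the logarithmic weight; cases (iii) and (iv) are again dual to each other under the same inversion.
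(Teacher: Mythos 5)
Your approach is genuinely different from the paper's. The paper decomposes $\R^d$ into the dyadic spatial annuli $\C_k=\{2^k\le|x|<2^{k+1}\}$, applies a Poincar\'e-type inequality for $I_\delta$ on each annulus (Lemma \ref{techlemma}), and then runs a telescoping/geometric-sum argument on the averages $\dashint_{\C_k}u$, with the cases $p<d$, $p>d$, $p=d$ handled by summing in different directions and with different weights. You instead propose an amplitude-dyadic (layer-cake) decomposition on the level sets $A_k=\{|u|>k\delta\}$ and a local pairing argument. However, there are two genuine gaps.

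First, your ``crux'' inequality $\delta^p\sum_{k\ge 1}k^{p-1}\int_{A_k}|x|^{-p}\,dx\lesssim I_\delta(u)$ is not a reduction of the problem: since $\delta^p\sum_{k\ge1}k^{p-1}\mathbf 1_{A_k}(x)\sim\big((|u(x)|-\delta)_+\big)^p$ pointwise, it is \emph{equivalent} to $\int\big((|u|-\delta)_+\big)^p/|x|^p\,dx\lesssim I_\delta(u)$, which is the target itself after the trivial low-level split. The hard work has not been displaced, only reformulated. Second, the pairing heuristic that is supposed to prove this is not sound as stated. You want, for each $x\in A_k$ and each intermediate level $\ell<k$, a partner $y_\ell$ with $|u(y_\ell)|\le\ell\delta$ and $|x-y_\ell|\lesssim|x|$. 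But if $u$ has a ``plateau'' --- say $u\equiv M\gg\delta$ on $B_{R/2}$, dropping to $0$ on $B_R\setminus B_{R/2}$ --- then for $x$ with $|x|\ll R$ the nearest point with $|u|\le\ell\delta$ (for small $\ell$) is at distance $\sim R\gg|x|$, so the pair contributes only $\delta^p/R^{d+p}$, far less than the needed $\delta^p/|x|^{d+p}$. The missing ingredient is a propagation mechanism across scales --- exactly the role played in the paper by the telescoping estimate $\big|\dashint_{\C_k}u-\dashint_{\C_{k+1}}u\big|\lesssim\big(2^{-(d-p)k}I_\delta(u,\C_k\cup\C_{k+1})+\delta^p\big)^{1/p}$ combined with Lemma \ref{lem-Holder}. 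Without such a chaining step the pairing bookkeeping cannot close, and it is unclear the disjointness/pigeonhole you gesture at would rescue it.

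The Kelvin inversion for case ii) also does not go through: writing $x^*=r^2x/|x|^2$ and $v=u\circ(\,\cdot\,)^*$, one computes $|x^*-y^*|=r^2|x-y|/(|x||y|)$, and the Jacobian gives
\[
I_\delta(v)=\mathop{\iint}_{\{|u(a)-u(b)|>\delta\}}\frac{\delta^p\,(|a||b|/r^2)^{\,p-d}}{|a-b|^{d+p}}\,da\,db,
\]
which carries an extra weight $(|a||b|)^{p-d}$ relative to $I_\delta(u)$. So $I_\delta$ is \emph{not} inversion-covariant, and case ii) cannot be reduced to case i) this way. (The paper treats ii) by the same annular scheme as i), simply summing $k$ from $m$ to $+\infty$ and telescoping in the opposite direction.) Your treatment of iii)--iv) inherits both issues.
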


In light of \eqref{Idelta-p1}, by letting $\delta \to 0$, one obtains variants of $i), ii), iii), iv)$ of Theorem~\ref{thm-Hardy} where the RHS is replaced by $ C \int_{\R^d} |\nabla u|^p \, dx$; see Proposition~\ref{pro1} for a more general version. 
By \eqref{Idelta-p1} and \eqref{Idelta-p2}, Theorem~\ref{thm-Hardy} provides improvement of Hardy's inequalities in the case $p>1$. 



%
%
%
%
%

\medskip 

We next discuss an improved version of Caffarelli-Kohn-Nirenberg in the case the exponent $a=1$. The more general case is considered in Theorem~\ref{CKN-g} (see also Proposition~\ref{CKN-g-1}).  
Set, for $p \ge 1$, $\alpha \in \R$, and $\Omega$ a measurable subset of $\R^d$,
\begin{equation*}
I_{\delta}(u, \Omega, \alpha) := \mathop{\int_{\Omega} \int_{\Omega}}_{\{|u(x) - u(y)| > \delta \}} \frac{\delta^p |x|^{p \alpha}}{ |x - y|^{d + p}} \, dx \, dy, \quad \mbox{ for } u \in L^1_{\loc}(\Omega). 
\end{equation*}
If $\Omega = \R^d$, we simply denote $I_\delta(u, \R^d, \alpha)$ by $I_{\delta} (u, \alpha)$. 
We have


\begin{theorem}[Improved Caffarelli-Kohn-Nirenberg's inequality for $a=1$]
	\label{thm-CKN} Let $d \ge 2$, $1 < p < d$, $\rr > 0$,  $0< r < R$, and $u \in L^p_{\loc}(\R^d)$. Assume that 
	$$
	\frac{1}{\rr} + \frac{\gamma}{d} = \frac{1}{p} + \frac{\alpha -1}{d} \quad \mbox{ and } \quad 
0 \le \alpha - \gamma \le 1. 
$$
We have 
\begin{enumerate}	
	
\item[i)] if $d-p + p \alpha > 0$ and $\supp u \subset B_R$, then 
\begin{equation*}
	\left(\int_{\R^d} |x|^{\gamma \rr} |u(x)|^\rr \, dx \right)^{p/\rr} \le C \left( I_{\delta} (u, \alpha)+ R^{d-p + p \alpha } \delta^p \right), 
\end{equation*}

\item[ii)] if $d-p + p \alpha <  0$ and $\supp u \subset \R^d \setminus B_r$, then 
\begin{equation*}
	\left(\int_{\R^d} |x|^{\gamma \rr} |u(x)|^\rr\, dx \right)^{p/\rr} \le C \left( I_{\delta} (u, \alpha) + r^{d-p + p \alpha} \delta^p \right), 
\end{equation*}

\item[iii)] if $d-p +  p \alpha = 0$, $\rr > 1$,  and $\supp u \subset B_R$, then 
\begin{equation*}
	\left(\int_{\R^d \setminus B_r} \frac{|x|^{\gamma \rr}   |u(x)|^\rr}{\ln^{\rr} (2 R/ |x|) } \, dx \right)^{p/\rr} \le C \Big( I_{\delta} (u, \alpha)+ \ln (2R/ r) \delta^p \Big), 
\end{equation*}

\item[iv)] if $d-p + p \alpha = 0$, $\rr > 1$,  and $\supp u \subset \R^d \setminus B_r$, then 
\begin{equation*}
	\left(\int_{B_R} \frac{|x|^{\gamma \rr}   |u(x)|^\rr}{\ln^{\rr} (2 |x| /r) }\, dx \right)^{p/\rr} \le C \Big( I_{\delta} (u, \alpha)+ \ln (2R/ r) \delta^p \Big).
\end{equation*}


\end{enumerate}	
Here  $C$  denotes a positive constant independent of $u$, $r$, and $R$. 
\end{theorem}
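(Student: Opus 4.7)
The inequalities are, up to the $\delta^p$ boundary term, scale-invariant under $u(\cdot) \mapsto u(\lambda\,\cdot)$ thanks to the dimensional identity $\frac{1}{\rr} + \frac{\gamma}{d} = \frac{1}{p} + \frac{\alpha-1}{d}$. The natural strategy is therefore a dyadic decomposition in the radial direction: on each dyadic annulus the weights $|x|^{p\alpha}$ and $|x|^{\gamma\rr}$ are essentially constant, so Theorem~\ref{thm-Hardy} can be invoked as the unweighted annular building block, and the weighted statement is then recovered by summation via the scaling identity.

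I would write out case~i) in detail, the remaining cases being analogous up to the direction of the dyadic tower and the logarithmic correction in the critical cases. Set $R_k := R \cdot 2^{-k}$ and $A_k := B_{R_k} \setminus B_{R_{k+1}}$ for $k \ge 0$, so that $B_R \setminus\{0\} = \bigsqcup_{k\ge 0} A_k$. On each $A_k$ both weights are comparable to constants of size $R_k^{\gamma\rr}$ and $R_k^{p\alpha}$, whence
\begin{equation*}
\int_{\R^d} |x|^{\gamma\rr}|u|^\rr\,dx \;\sim\; \sum_{k \ge 0} R_k^{\gamma\rr}\int_{A_k}|u|^\rr\,dx, \qquad I_\delta(u, \alpha) \;\ge\; c\sum_{k \ge 0} R_k^{p\alpha}\, I_\delta(u, \widetilde{A}_k),
\end{equation*}
where $\widetilde{A}_k$ is a bounded-overlap enlargement of $A_k$ absorbing the nonlocal interactions. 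After rescaling $A_k$ to unit size, I apply Theorem~\ref{thm-Hardy}~i) at the scale $R_k$, coupled with a Poincar\'e/Sobolev step to promote the $1/|x|^p$-weighted Hardy bound to a genuine $L^\rr$-norm estimate, obtaining
\begin{equation*}
\left(\int_{A_k}|u|^\rr\,dx\right)^{p/\rr} \;\le\; C\,\left(R_k^{\,d-p-\gamma p}\, I_\delta(u,\widetilde{A}_k) + R_k^{\,d-p}\,\delta^p\right).
\end{equation*}

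Re-inserting the weight $R_k^{p\alpha}$ and summing over $k$, the scaling identity makes the $R_k$-powers cancel in the $I_\delta$-term, while the $\delta^p$ contributions form a geometric series $\sum_k R_k^{\,d-p+p\alpha}$. The hypothesis $d-p+p\alpha > 0$ of case~i) ensures summability and delivers the required boundary term $R^{d-p+p\alpha}\delta^p$. Case~ii) is the mirror image, using the exterior dyadic decomposition $\R^d \setminus B_r = \bigsqcup_k (B_{2^{k+1}r} \setminus B_{2^k r})$ and the opposite sign hypothesis $d-p+p\alpha < 0$. In the critical cases iii)-iv), where $d-p+p\alpha = 0$, the geometric series degenerates into a harmonic sum of length $\sim \ln(2R/r)$; this is exactly compensated by the Bliss-type factor $\ln^{-\rr}(\cdot)$ inserted on the left-hand side, which weights the $k$-th scale by $2^{-\rr k}$ and restores summability, and the hypothesis $\rr > 1$ is used to combine the annular $L^\rr$ estimates by Jensen.

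The principal obstacle is the annular step. Unlike the Dirichlet energy, the nonlocal functional $I_\delta$ does not localise cleanly on the $A_k$: pairs $(x,y)$ with $x, y$ in distant annuli must be controlled, either via the bounded-overlap enlargements $\widetilde{A}_k$ or by exploiting the fact that the constraint $|u(x)-u(y)| > \delta$ forces the cost to concentrate where $u$ varies most. A secondary technical point is that the passage from Hardy to a weighted $L^\rr$ bound on a single annulus requires a mean-removal argument; the resulting mean values must be tracked through the dyadic sum and shown to be compatible with the global scaling, a delicate but standard computation given the dimensional identity.
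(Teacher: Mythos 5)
Your high-level architecture — dyadic annuli, scaling to reduce the weights to constants on each shell, an annular Sobolev--Poincar\'e estimate, and a sum controlled by the dimensional identity — is indeed the strategy of the paper. But the execution has two genuine gaps, and the second is the crux of the whole argument.

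First, the intermediate annular estimate you write,
$$
\left(\int_{A_k}|u|^\rr\,dx\right)^{p/\rr} \le C\left(R_k^{\,d-p-\gamma p}\, I_\delta(u,\widetilde{A}_k) + R_k^{\,d-p}\,\delta^p\right),
$$
cannot hold without mean removal: for $u \equiv 1$ on $\widetilde A_k$ the left side is $\sim R_k^{dp/\rr}$ while the right side is $\sim R_k^{d-p}\delta^p$, and these are incompatible for small $\delta$. What one actually gets from the Sobolev--Poincar\'e inequality for $I_\delta$ (Lemma~\ref{lem-S} and Corollary~\ref{cor-S-P-1}; these are genuinely new nonlocal Sobolev inequalities requiring an extension argument on $\Omega_\tau$, and are not consequences of Theorem~\ref{thm-Hardy}) is a bound on $\int_{\C_k}|u - \dashint_{\C_k}u|^\rr$, and the estimate one then sums is \eqref{CKN-part1}, which explicitly carries the extra term $2^{(\gamma\rr+d)k}\big|\dashint_{\C_k}u\big|^\rr$. (The $R_k$-exponents in your display are also off by $p\alpha - d + p$ relative to the correct scaling.)

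Second, and more importantly, you dismiss the control of the accumulated means as ``a delicate but standard computation.'' It is precisely the opposite: the paper singles this out as the novelty relative to \cite{CKN}, and it is where the sign hypothesis $d-p+p\alpha>0$ (equivalently $1/\rr + \gamma/d > 0$), the direction of the telescoping, and the support condition enter. The mechanism is: a mean-comparison inequality between consecutive shells, sharpened via the elementary inequality of Lemma~\ref{lem-Holder}, $(|a|+|b|)^\rr \le c|a|^\rr + C(c-1)^{1-\rr}|b|^\rr$, chosen with $c = 2/(1 + 2^{\gamma\rr+d}) < 1$, so that $\sum_k 2^{(\gamma\rr+d)k}|\dashint_{\C_k}u|^\rr$ telescopes geometrically against the oscillation terms, with the boundary term killed by $\supp u \subset B_R$. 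Cases ii)--iv) reverse the direction of the telescoping or replace the geometric factor by $(n-k+1)^{-\xi}$, which is where the logarithmic weight and the hypothesis $\rr>1$ enter. Without spelling this telescoping out, the proof is incomplete; with it, it is the paper's proof.
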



\begin{remark} \rm In contrast with Theorem~\ref{thm-Hardy},  in Theorem~\ref{thm-CKN}, we assume that $1 < p < d$. This assumption is required due to the use of  Sobolev's inequality related to $I_\delta(u, \Omega, 0)$ (see  Lemmas~\ref{lem-S} and \ref{lem-S-P-3}). 
\end{remark}


\begin{remark} \label{rem-Muck}
\rm Using the theory of maximal functions with weights due to  Muckenhoupt \cite{Muck} (see also \cite{CF}), one can bound
$I_\delta(u, \alpha)$  by $C \int_{\R^d} |x|^{p \alpha} |\nabla u|^p \, dx$ for $-1/ p < \alpha < 1 - 1/p$ and get an improvement of Caffarelli-Kohn-Nirenberg's inequality for $a=1$ via Theorem~\ref{thm-CKN} and for  $0 < a < 1$ and $0 \le \alpha - \sigma \le 1$ via Theorem~\ref{CKN-g} in Section~\ref{ckn}. The details of this fact are given in Remark~\ref{rem-Maximal} (see also Remark~\ref{rem-CKN-g} for a different approach covering a more general result).  
\end{remark}

We later prove a general version of Theorem~\ref{thm-CKN} in  Theorem~\ref{CKN-g}, where $0 < a \le 1$,   which implies 
Proposition~\ref{CKN-g-1} by interpolation. As a consequence of Theorem~\ref{CKN-g} (see also Remark~\ref{rem-CKN-g}) and Proposition~\ref{CKN-g-1}, we have

\begin{proposition}\label{pro1} Let $p \ge 1$, $q \ge 1$, $\rr > 0$, $0 <   a \le   1$, $\alpha, \, \beta, \, \gamma \in \R$ be such that 
\begin{equation*}
\frac{1}{\rr} + \frac{\gamma}{d} = a \left(\frac{1}{p} + \frac{\alpha - 1}{d} \right) + (1-a) \left( \frac{1}{q} + \frac{\beta}{d} \right), 
\end{equation*}
and, with $\gamma = a \sigma + (1 -a) \beta$, 
\begin{equation*}
0 \le  \alpha - \sigma
\end{equation*}
and 
\begin{equation*}
\alpha - \sigma \le  1 \quad  \mbox{ if }  \quad \frac{1}{\rr} + \frac{\gamma}{d}  = \frac{1}{p} + \frac{\alpha - 1}{d}. 
\end{equation*}
We have, for $u \in C^1_{c}(\R^d)$, 
\begin{enumerate}

\item[A1)] if $1/ \rr + \gamma/ d > 0$,  then 
\begin{equation*}
\left(\int_{\R^d}  |x|^{\gamma \rr}|u|^{\rr} \, dx \right)^{1/\rr} \le C \| |x|^\alpha \nabla u \|_{L^p(\R^d)}^{a} \| |x|^\beta u   \|_{L^q(\R^d)}^{(1-a)},
\end{equation*}

\item[A2)] if $1/ \rr + \gamma/ d <  0$ and  $\supp u \subset \R^d \setminus \{0 \}$,  then 
\begin{equation*}
\left(\int_{\R^d}  |x|^{\gamma \rr}|u|^{\rr} \, dx \right)^{1/\rr} \le C \| |x|^\alpha \nabla u \|_{L^p(\R^d)}^{a} \| |x|^\beta u   \|_{L^q(\R^d)}^{(1-a)}. 
\end{equation*}

\end{enumerate}
Assume in addition that $\alpha  - \sigma \le 1$ and $\rr > 1$. We have
\begin{enumerate}
\item[A3)] if $1/ \rr + \gamma/ d = 0$ and $\supp u \subset B_R$ for some $R>0$,   then 
\begin{equation*}
\left(\int_{\R^d}  \frac{ |x|^{\gamma \rr}}{\ln^\rr(2 R/ |x|)} |u|^{\rr} \,dx \right)^{1/\rr} \le C \| |x|^\alpha \nabla u \|_{L^p(\R^d)}^{a}\| |x|^\beta u   \|_{L^q(\R^d)}^{(1-a)}, 
\end{equation*}

\item[A4)] if $1/ \rr + \gamma/ d = 0$ and  $\supp u \subset \R^d \setminus B_r$ for some $r>0$,  then 
\begin{equation*}
\left(\int_{\R^d}  \frac{ |x|^{\gamma \rr}}{\ln^\rr(2 |x| / r )} |u|^{\rr} \, dx \right)^{1/\rr} \le C \| |x|^\alpha \nabla u \|_{L^p(\R^d)}^{a}\| |x|^\beta u   \|_{L^q(\R^d)}^{(1-a)}. 
\end{equation*}

\end{enumerate}
Here  $C$  denotes a positive constant independent of $u$, $r$, and $R$. 
\end{proposition}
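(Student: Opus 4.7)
The plan is to derive Proposition~\ref{pro1} as a consequence of Theorem~\ref{CKN-g} and its interpolated form Proposition~\ref{CKN-g-1}, by bounding the nonlocal weighted energy $I_\delta(u,\alpha)$ by the classical weighted Dirichlet integral via the Muckenhoupt weighted maximal inequality (Remark~\ref{rem-Muck} / Remark~\ref{rem-CKN-g}), and then letting $\delta \searrow 0$ so that the $\delta$-dependent correction terms disappear.

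I first fix $u \in C^1_{c}(\R^d)$ and split the four statements according to the sign of $1/\rr + \gamma/d$. For the subcritical cases $A1)$ and $A2)$, I pick any $R > 0$ with $\supp u \subset B_R$ (in $A1)$) or $r > 0$ with $\supp u \subset \R^d \setminus B_r$ (in $A2)$), and apply Theorem~\ref{CKN-g}. The resulting estimate has the schematic form
\begin{equation*}
\left(\int_{\R^d}|x|^{\gamma\rr}|u|^\rr\,dx\right)^{1/\rr} \le C \bigl(I_\delta(u,\alpha) + R^{d-p+p\alpha}\delta^p\bigr)^{a/p}\,\bigl\||x|^\beta u\bigr\|_{L^q(\R^d)}^{1-a},
\end{equation*}
with an analogous formula for $A2)$ (replacing $R$ by $r$ and $d-p+p\alpha > 0$ by $d-p+p\alpha < 0$). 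Using Remark~\ref{rem-Muck} one gets $I_\delta(u,\alpha) \le C\int_{\R^d}|x|^{p\alpha}|\nabla u|^p\,dx$ uniformly in $\delta$, and sending $\delta \searrow 0$ eliminates the correction term, yielding $A1)$ and $A2)$.

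For the critical cases $A3)$ and $A4)$, I instead enter the logarithmic-weight branches of Theorem~\ref{CKN-g}, which correspond to cases $iii)$ and $iv)$ of Theorem~\ref{thm-CKN}; the hypotheses $\alpha - \sigma \le 1$ and $\rr > 1$ match precisely the assumptions needed to access these branches. Again the nonlocal energy is dominated by the weighted Dirichlet integral through Muckenhoupt theory, and the limit $\delta \searrow 0$ kills the residual $\ln(2R/r)\delta^p$ term, producing the logarithmically-weighted inequalities.

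The main obstacle I anticipate is matching the range of $\alpha$ admissible in the Muckenhoupt bound with the full hypotheses of Proposition~\ref{pro1}: when $\alpha \notin (-1/p,\,1-1/p)$ the direct $A_p$-weight argument of Remark~\ref{rem-Muck} does not apply, and one is forced to invoke the alternative route indicated in Remark~\ref{rem-CKN-g}, which the authors signal is designed to cover this broader regime. Once that step is in place, the rest of the argument is essentially bookkeeping: separating the four sign-cases of $1/\rr + \gamma/d$ and choosing the appropriate support assumption to match the correct branch of Theorem~\ref{CKN-g}.
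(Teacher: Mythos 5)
Your overall plan names the right ingredients (Theorem~\ref{CKN-g} plus Proposition~\ref{CKN-g-1}), and the idea of bounding the nonlocal energy by a weighted Dirichlet integral and then dispensing with the $\delta$-dependent remainders matches the paper's derivation. However, your detailed case analysis has a genuine gap: it only runs Theorem~\ref{CKN-g} for $A1)$ and $A2)$, but Theorem~\ref{CKN-g} requires $0 \le \alpha - \sigma \le 1$, whereas the hypotheses of Proposition~\ref{pro1} allow $\alpha - \sigma > 1$ in those two items (whenever $1/\rr + \gamma/d \neq 1/p + (\alpha-1)/d$). In that complementary regime, Theorem~\ref{CKN-g} simply does not apply, and one must instead use Proposition~\ref{CKN-g-1}, which already produces the inequality in $A1)$ or $A2)$ directly in terms of $\| |x|^\alpha \nabla u\|_{L^p}$ --- with no $\delta$, no $I_\delta$, and hence no limit to take. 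You mention Proposition~\ref{CKN-g-1} in your opening sentence but never actually deploy it, and the necessary split on $\alpha - \sigma$ is nowhere made explicit.

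Separately, the obstacle you identify is misdiagnosed. The paper does not use the global Muckenhoupt bound of Remark~\ref{rem-Muck} here at all; it uses Remark~\ref{rem-CKN-g}, which bounds each local quantity $I_\delta(k,u)$ by $\int_{\C_k \cup \C_{k+1}} |x|^{p\alpha}|\nabla u|^p\,dx$ and then sums. Because $|x| \sim 2^k$ on the dyadic annulus $\C_k \cup \C_{k+1}$, the weight $|x|^{p\alpha}$ is comparable to a constant on each piece, so no $A_p$-type restriction $-1/p < \alpha < 1-1/p$ is ever needed; the bound holds for every $\alpha \in \R$ and $p>1$, and for $p=1$ or $p \ge d$ the quantity $I_\delta(k,u)$ is by definition already the weighted gradient integral. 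Remark~\ref{rem-CKN-g} is not a ``fallback covering a broader $\alpha$-range'' in the way you describe --- it is the default mechanism, and the range issue it addresses is orthogonal to the constraint $\alpha - \sigma \le 1$, which is the one actually resolved by the interpolation argument in Proposition~\ref{CKN-g-1}.
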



Assertion $A1)$ is a slight improvement of 
the classical Caffarelli-Kohn-Nirenberg. Indeed, in the classical setting, Assertion $A1)$ is established under the {\it additional} assumptions 
\begin{equation*}
1/ p + \alpha/ d > 0 \quad \mbox{ and } \quad 1/ q + \beta/ d > 0,
\end{equation*}
as mentioned  in \eqref{totototo} in the introduction.  Assertion $A2)$ with $a = 1$ and $\rr = p$ was  known (see, e.g., \cite{FS}).  Concerning Assertion $A3)$ with $a = 1$, this was obtained for $d=2$ in  \cite{CM} and \cite{adi} and, for $d \ge 3$, this was established in \cite{adi}. Assertion $A4)$ with $a=1$ might be known; however, we cannot find any references for it. To our knowledge, the remaining cases seem to be new. 

\medskip 
Analogous versions in a bounded domain will be given in Section~\ref{boundedcase}. 

\medskip 

The ideas used in the proof of Theorems~\ref{thm-Hardy} and \ref{thm-CKN}, and their general version (Theorem~\ref{CKN-g}) are as follows. On one hand, this is based on Poincare's and Sobolev inequalities related to $I_\delta(u, \Omega)$ (see Lemma~\ref{techlemma}  and Lemma~\ref{lem-S}). These inequalities have their roots in \cite{Ng11}. Using these inequalities, we derive 
the key estimate (see Lemma~\ref{lem-S-P-3} and also Lemma~\ref{techlemma}), for an annulus $D$ centered at the origin and for $\lambda > 0$,  
\begin{equation}\label{Poincare-1}
\left(\dashint_{\lambda D} \left| u -  \dashint_{\lambda D} u \right|^{\rr} \, dx  \right)^{1/\rr} \le C  \Big(\lambda^{p-d} I_\delta (u, \lambda D) + \delta^p \Big)^{a/p} 
\left(\dashint_{\lambda D} \left| u -  \dashint_{\lambda D} u \right|^{q} \, dx  \right)^{(1-a)/q}, 
\end{equation}
for some positive constant  $C$ independent of $u$ and $\lambda$. 
On the other hand, decomposing $\R^d$ into annuli $\C_k$ which are defined by  
\begin{equation*}
\C_k : = \big\{x \in \R^d: 2^ k \le |x| < 2^{k+1} \big\},
\end{equation*}
and applying \eqref{Poincare-1}  to each $\C_k$, we obtain 
$$
\left(\dashint_{\C_k} \left| u - \dashint_{\C_k} u  \right|^\rr \, dx \right)^{1/\rr} \le C \Big(2^{-(d-p) k} I_\delta(u, \C_k) + \delta^p \Big)^{a/p}  \Big( \dashint_{\C_k}|u|^q\Big)^{(1-a)/q}, 
$$
Similar idea was used in \cite{CKN}. Using \eqref{Poincare-1} again in the cases $i)$ and $ii)$, we can derive an appropriate estimate for 
\begin{equation*}
2^{(\gamma \rr + d)k}\left| \dashint_{\C_{k}} u \right|^\rr. 
\end{equation*}
This is the novelty in comparison with the approach in \cite{CKN}. Combining these two facts, one obtains the desired inequalities. The other cases follow similarly.  Similar approach is used to establish Caffarelli-Kohn-Nirenberg's inequalities for fractional Sobolev spaces in \cite{NgS3}. 

We now make some comments on the magnetic Sobolev setting. 
If $A:\R^d\to\R^d$ is locally bounded and $u:\R^d\to{\mathbb C}$, we set
$$
\Psi_u(x,y):=e^{\i (x-y)\cdot A\left(\frac{x+y}{2}\right)}u(y),\quad\,\, x,y\in\R^d.
$$
The following {\em diamagnetic inequality} holds 
$$
||u(x)|-|u(y)||\leq
\big|\Psi_u(x,x)-\Psi_u(x,y)\big|,\quad\text{for a.e.\ $x,y\in\R^d$.}
$$	
In turn, by defining
\begin{align*}
I_{\delta}^A(u, \alpha) &= \mathop{\int_{\R^d} \int_{\R^d}}_{\{|\Psi_u(x,y)-\Psi_u(x,x)|>\delta \}} \frac{\delta^p |x|^{p \alpha}}{|x - y|^{d + p}} \, dx \, dy, 
\end{align*}
we have, for $\alpha \in \R$, 
\begin{equation*}
\label{dia}
I_{\delta}(|u|, \alpha) \leq 	I_\delta^A(u, \alpha ) \quad \text{for all $\delta>0$.}
\end{equation*}
Then, the assertions of Theorem~\ref{thm-Hardy} and \ref{thm-CKN} 
keep holding with $ I_{\delta}^A(u, 0)$ (resp.\ $I_{\delta}^A(u, \alpha)$) on the right-hand side in place of $ I_{\delta}(u)$ (resp.\ $I_{\delta}(u, \alpha)$). 
For the sake of completeness, we refer the reader to \cite{magn-case} 
for some recent results about new characterizations of classical magnetic Sobolev spaces in the terms of $I_\delta^A(u, 0)$ (see \cite{magn-case, squ-volz, acv-p}  for the ones related to $J_\delta$).
\medskip

The paper is organized as follows. \newline
In Section~\ref{hardy} we prove Theorem~\ref{thm-Hardy}.
In Section~\ref{ckn} we prove Theorem~\ref{CKN-g} and Proposition~\ref{CKN-g-1} which imply Theorem~\ref{thm-CKN} and Proposition~\ref{pro1}. In Section~\ref{boundedcase} we present  
versions of Theorems~\ref{thm-Hardy} and \ref{CKN-g} in a bounded domain $\Omega$. 

\section{Improved Hardy's inequality} 
\label{hardy}
We first recall that a straightforward variant of \cite[Theorem 1]{Ng11} yields the following
\begin{lemma}
	\label{techlemma}
Let $d\geq 1$, $p\geq 1$  and set
$$
D: =\big\{ x \in \R^d: r < |x| < R \big\}.
$$
Then 
\begin{equation*}
 \dashint_{D} \left|u(x) - \dashint_{D} u\right|^p \, dx \le C_{r, R} \big( I_\delta(u, D) + \delta^p \big), \quad\mbox{for all $u \in L^p(D)$}. 
\end{equation*}
 As a consequence, we have, for  $\lambda > 0$, 
\begin{equation}\label{inequality-1}
 \dashint_{\lambda D} \left|u(x) - \dashint_{\lambda D} u\right|^p \, dx \le C_{r, R} \big( \lambda^{p-d} I_\delta(u, \lambda D) + \delta^p \big), \quad\mbox{for all   $u \in L^p(\lambda D)$}, 
\end{equation}
where  $\lambda D: = \{\lambda x: x \in D \}$. Here  $C_{r, R}$ denotes a positive constant independent of $u$, $\delta$, and $\lambda$. 
\end{lemma}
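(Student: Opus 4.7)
The plan is to deduce the first inequality as a direct variant of the Poincar\'e-type inequality for $I_\delta$ established in \cite[Theorem 1]{Ng11}, and then to derive the scaled version from it via a simple change of variables.

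First I would invoke \cite[Theorem 1]{Ng11}, which provides a Poincar\'e-type estimate of the form
$$\dashint_\Omega \bigl|u - \dashint_\Omega u\bigr|^p \, dx \le C_\Omega \bigl( I_\delta(u,\Omega) + \delta^p\bigr)$$
on bounded Lipschitz connected reference domains $\Omega \subset \R^d$, with a constant depending only on the geometry of $\Omega$. Since the annulus $D$ is such a domain whose geometry is entirely encoded by $r$ and $R$, the first assertion then follows at once with $C_{r,R}$ depending only on these two radii. Should the statement of \cite{Ng11} be formulated only on a single reference ball or cube, the extension to $D$ is routine: cover $D$ by finitely many balls $B_1,\dots,B_N$ contained in $D$ forming a connected chain with substantial pairwise overlaps, apply the reference inequality on each $B_j$, and telescope the differences $\dashint_{B_j} u - \dashint_{B_{j+1}} u$ by exploiting the overlap regions. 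Both the number of balls and the relative volumes of the overlaps depend only on $r$ and $R$, so the final constant has the claimed dependence.

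For the scaled inequality, I set $v(x) := u(\lambda x)$ for $x \in D$. The substitution $y = \lambda x$ immediately gives $\dashint_D v = \dashint_{\lambda D} u$ together with
$$\dashint_{\lambda D} \Bigl| u - \dashint_{\lambda D} u \Bigr|^p \, dy = \dashint_D \Bigl| v - \dashint_D v \Bigr|^p \, dx.$$
Moreover, using $|x-x'| = |y-y'|/\lambda$, $dx\, dx' = \lambda^{-2d}\, dy\, dy'$, and the fact that the constraint $\{|u(y)-u(y')|>\delta\}$ pulls back to $\{|v(x)-v(x')|>\delta\}$, a direct calculation yields
$$I_\delta(v, D) = \lambda^{p-d}\, I_\delta(u, \lambda D).$$
Applying the first assertion to $v$ in place of $u$ and rearranging then produces \eqref{inequality-1}.

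The main, and indeed only, obstacle lies in verifying that the Poincar\'e inequality from \cite{Ng11} is available on the annulus $D$ with a constant of the claimed shape; modulo the standard covering and telescoping argument sketched above, this is genuinely a straightforward variant, and the remainder of the proof reduces to the routine scaling computation.
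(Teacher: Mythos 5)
Your proof is correct and takes essentially the same route as the paper, which likewise dismisses the unscaled estimate as a straightforward variant of \cite[Theorem 1]{Ng11} and leaves the scaling to the reader. Your explicit change-of-variables computation showing $I_\delta(v,D)=\lambda^{p-d}I_\delta(u,\lambda D)$ and your sketch of the covering/telescoping argument for extending the reference Poincar\'e inequality to the annulus are both sound, and in fact fill in details the paper omits.
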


The following elementary inequality will be used several times in this paper. 

\begin{lemma}\label{lem-Holder} Let  $\Lambda> 1$ and $\rr > 1$.  There exists $C = C(\Lambda, \rr) > 0$, depending only on $\Lambda$ and $\rr$ such that, for all $1 < c < \Lambda$, 
\begin{equation}\label{thm1-observation}
(|a| + |b|)^\rr \le c |a|^\rr + \frac{C}{(c - 1)^{\rr -1}} |b|^\rr,\quad \mbox{ for all } a, b \in \R.  
\end{equation}
\end{lemma}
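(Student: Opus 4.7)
The plan is a one-parameter convexity argument followed by an asymptotic analysis as $c \to 1^+$. First I would reduce to the case $a, b \ge 0$ (the inequality only involves $|a|, |b|$), and then, exploiting that $t \mapsto t^{\rr}$ is convex for $\rr > 1$, apply Jensen's inequality with weights $\theta, 1-\theta$:
\begin{equation*}
(a+b)^{\rr} \;=\; \Big(\theta \cdot \tfrac{a}{\theta} + (1-\theta) \cdot \tfrac{b}{1-\theta}\Big)^{\rr} \;\le\; \theta \Big(\tfrac{a}{\theta}\Big)^{\rr} + (1-\theta) \Big(\tfrac{b}{1-\theta}\Big)^{\rr} \;=\; \frac{a^{\rr}}{\theta^{\rr-1}} + \frac{b^{\rr}}{(1-\theta)^{\rr-1}},
\end{equation*}
valid for any $\theta \in (0,1)$. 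This is the analog of the ``Young-with-$\varepsilon$'' trick adapted to produce exactly the coefficient $c$ in front of $|a|^{\rr}$.

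The natural choice is $\theta := c^{-1/(\rr-1)}$, so that $\theta^{-(\rr-1)} = c$ and $\theta \in (0,1)$ since $1 < c < \Lambda$. Substituting gives
\begin{equation*}
(a+b)^{\rr} \;\le\; c\,a^{\rr} + \frac{1}{\bigl(1 - c^{-1/(\rr-1)}\bigr)^{\rr-1}}\, b^{\rr},
\end{equation*}
and the problem reduces to showing that $1 - c^{-1/(\rr-1)} \ge K(\Lambda,\rr)\,(c-1)$ uniformly on $(1, \Lambda)$, with $K > 0$ depending only on $\Lambda$ and $\rr$. This is a one-variable check: the function $g(c) := 1 - c^{-1/(\rr-1)}$ satisfies $g(1) = 0$ and $g'(c) = \frac{1}{\rr-1}\,c^{-\rr/(\rr-1)}$, which is continuous and strictly positive on the compact interval $[1, \Lambda]$, hence bounded below by some $K = K(\Lambda, \rr) > 0$. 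Integrating yields $g(c) \ge K(c-1)$ for all $c \in (1, \Lambda]$, and raising to the power $\rr-1$ gives the desired bound on the coefficient of $b^{\rr}$ with $C = K^{-(\rr-1)}$.

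There is no serious obstacle here; the only thing to be careful about is ensuring that the final constant depends only on $\Lambda$ and $\rr$ (and not on $c$), which is precisely why the uniform lower bound on $g'$ over the compact interval $[1, \Lambda]$ is needed. If one wanted a fully explicit constant, one could replace the compactness argument by the elementary bound $1 - (1+s)^{-1/(\rr-1)} \ge \tfrac{s}{(\rr-1)(1+s)^{\rr/(\rr-1)}}$ for $s = c - 1 \in (0, \Lambda - 1)$, which gives an explicit $K(\Lambda, \rr)$, but this is a cosmetic refinement.
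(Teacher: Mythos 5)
Your proof is correct, and it takes a genuinely different route from the paper's. The paper first disposes of the case $|b|\ge|a|$, rescales to reduce to the one-variable inequality $(x+1)^\rr \le c\,x^\rr + C/(c-1)^{\rr-1}$ for $x\ge 1$, then studies the difference $f(x)$ by calculus: it locates the interior critical point $x_0 = (c^{1/(\rr-1)}-1)^{-1}$, splits into the cases $c^{1/(\rr-1)}>2$ and $c^{1/(\rr-1)}\le 2$, and in the latter uses the mean value theorem for $s\mapsto s^{\rr-1}$ to control $c-1$ from below by $c^{1/(\rr-1)}-1$. Your argument replaces all of this with a single application of Jensen's inequality to the convex function $t\mapsto t^\rr$ with weights $(\theta,1-\theta)$, choosing $\theta=c^{-1/(\rr-1)}$ so that the $|a|^\rr$ coefficient is exactly $c$; the only remaining work is the elementary comparison $1-c^{-1/(\rr-1)}\gtrsim c-1$ on $(1,\Lambda)$, which you get by bounding $g'(c)=\tfrac{1}{\rr-1}c^{-\rr/(\rr-1)}$ below on the compact interval. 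Interestingly, both proofs ultimately hinge on the same kind of fact (that the map $c\mapsto c^{\pm 1/(\rr-1)}$ is bi-Lipschitz equivalent to $c\mapsto c$ near $c=1$, with constants controlled on $[1,\Lambda]$), but your convexity route avoids the case analysis and the critical-point calculus entirely, so it is shorter and cleaner; the paper's proof has the minor virtue of working directly with the quantity it needs and not requiring the Jensen/Young reformulation, but there is no substantive advantage either way.
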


\begin{proof} Since \eqref{thm1-observation} is clear in the case $|b| \ge |a|$ and in the case $b=0$, by rescaling and considering $x = |a|/ |b|$, it suffices to prove, for $C = C (\Lambda, \rr)$  large enough, that
\begin{equation}\label{lem-p0}
(x + 1)^\rr \le c x^\rr + \frac{C}{(c - 1)^{\rr -1}},\quad \mbox{ for all } x  \ge 1.   
\end{equation}
Set 
$$
f(x) = (x + 1)^\rr - c x^\rr - \frac{C}{(c - 1)^{\rr -1}} \mbox{ for } x > 0. 
$$
We have 
$$
f'(x) = \rr (x+1)^{\rr -1} - c \rr x^{\rr -1} \quad \mbox{ and } \quad f'(x) = 0 \mbox{ if and only if } x = x_0 :=  \big(c^{\frac{1}{\rr -1} } - 1\big)^{-1}.
$$ 
One can check that
\begin{equation}\label{lem-p1}
\lim_{x \to + \infty} f(x) = - \infty,  \quad  \lim_{x \to 1} f(x) < 0 \mbox{ if $C = C(\Lambda, \rr)$ is large enough}.   
\end{equation}
and 
\begin{equation}\label{lem-p2}
f(x_0) = c x_0^{\rr -1}  - \frac{C}{(c - 1)^{\rr -1}}.
\end{equation}
If $c^{\frac{1}{\rr -1}} > 2$ then $x_0 < 1$ and \eqref{lem-p0} follows from \eqref{lem-p1}. Otherwise $1 \le s : = c^{\frac{1}{\rr -1}} \le 2 $. By the mean value theorem, we have
$$
s^{\rr -1} - 1 \le  (s - 1) \max_{1 \le t \le 2} (\rr -1) t^{\rr -2} \mbox{ for } 1 \le s \le 2. 
$$
We derive from \eqref{lem-p2} that, with $C = \Lambda \big[\max_{1 \le t \le 2} (\rr -1) t^{\rr -2} \big]^{\rr-1}$,  
$$
f(x_0) < 0. 
$$
The conclusion now follows from \eqref{lem-p1}. 
\end{proof}


We are now ready to give 
\medskip 

\noindent{\bf Proof of  Theorem~\ref{thm-Hardy}.}   Let $m, n \in \Z$ be such that 
$$
2^{n-1} \le R < 2^n \quad \mbox{ and } \quad 2^{m} \le  r < 2^{m + 1}.  
$$
It is clear that $n - m \ge 1$.
By \eqref{inequality-1} of Lemma~\ref{techlemma}, we have, for all $k \in \Z$,  
\begin{equation*}
\dashint_{\C_k} \left|u(x) - \dashint_{\C_k} u\right|^p \, dx \le C \Big( 2^{-(d-p)k} I_\delta(u, \C_k) + \delta^p \Big).   
\end{equation*}
Here and in what follows in this proof, $C$ denotes a positive constant independent of $k$, $u$, and $\delta$. This implies 
\begin{equation*}
2^{-pk} \int_{\C_k} \left|u(x) - \dashint_{\C_k} u\right|^p \, dx \le C \Big(  I_\delta(u, \C_k) + 2^{(d-p)k} \delta^p \Big). 
\end{equation*}
It follows that 
\begin{equation}\label{thm1-part1}
2^{-pk} \int_{\C_k} |u(x)|^p \, dx \le  C 2^{(d-p)k}  \left|\dashint_{\C_k} u\right|^p  +  C \Big(  I_\delta(u, \C_k) + 2^{(d-p)k} \delta^p \Big).
\end{equation}

\noindent
$\bullet$ {\bf Step 1:} Proof of $i)$.  Summing \eqref{thm1-part1} with respect to $k$ from $-\infty$ to $n$, we obtain 
\begin{equation}\label{thm1-part2}
\int_{\R^d} \frac{|u(x)|^p}{|x|^p} \, dx \le C \sum_{k = -\infty}^n 2^{(d-p)k} \left|\dashint_{\C_k} u\right|^p   + C I_{\delta} (u) + C  2^{(d-p)n}  \delta^p  ,  
\end{equation}
since $d>p$. We also have, by \eqref{inequality-1}, for $k \in \Z$, 
\begin{equation*}
\left| \dashint_{\C_{k}} u - \dashint_{\C_{k+1}} u \right| \le  C \Big(2^{-(d-p)k}  I_\delta(u, \C_k \cup \C_{k+1}) + \delta^p \Big)^{1/p}.
\end{equation*}
This implies 
\begin{equation*}
\left| \dashint_{\C_{k}} u \right| \le \left| \dashint_{\C_{k+1}} u \right| +   C \Big(2^{-(d-p)k}  I_\delta(u, \C_k \cup \C_{k+1}) + \delta^p \Big)^{1/p}.
\end{equation*}
Applying Lemma~\ref{lem-Holder}, we have 
\begin{equation*}
\left| \dashint_{\C_{k}} u \right|^p \le \frac{ 2^{d-p+1}}{1 + 2^{d-p}} \left|\dashint_{\C_{k+1}} u \right|^p +   C \Big(2^{-(d-p)k}  I_\delta(u, \C_k \cup \C_{k+1}) + \delta^p \Big).
\end{equation*}
It follows that, with  $c = 2/ (1 + 2^{d-p}) < 1$, 
\begin{equation*}
2^{(d-p)k}\left| \dashint_{\C_{k}} u \right|^p \le  c 2^{(d-p)(k+1)}\left| \dashint_{\C_{k+1}} u \right|^p + C \Big(  I_\delta(u, \C_k \cup \C_{k+1}) + 2^{(d-p)k} \delta^p \Big). 
\end{equation*}
We derive that 
\begin{equation}\label{thm1-part3}
\sum_{k = -\infty}^n 2^{(d-p)k} \left|\dashint_{\C_k} u\right|^p \le C \sum_{k=-\infty}^n I_{\delta} (u, \C_k \cup \C_{k+1}) + C 2^{(d-p)n} \delta^p. 
\end{equation}
A combination of \eqref{thm1-part2} and \eqref{thm1-part3} yields 
\begin{equation*}
\int_{\R^d} \frac{|u(x)|^d}{|x|^d} \, dx \le  C I_{\delta} (u) + C  2^{(d-p)n} \delta^p. 
\end{equation*}
The conclusion  of $i)$ follows.

\medskip 
\noindent
$\bullet$ {\bf Step 2:}  Proof of $ii)$.  Summing \eqref{thm1-part1} with respect to $k$ from $m$ to $+ \infty$, we obtain 
\begin{equation}\label{thm1-part2-4}
\int_{\R^d} \frac{|u(x)|^p}{|x|^p} \, dx \le C \sum_{k = m}^{+\infty} 2^{(d-p)k} \left|\dashint_{\C_k} u\right|^p   + C I_{\delta} (u) + C  2^{(d-p)m}  \delta^p  ,  
\end{equation}
since $p>d$. We also have, by \eqref{inequality-1}, for $k \in \Z$, 
\begin{equation*}
\left| \dashint_{\C_{k}} u - \dashint_{\C_{k+1}} u \right| \le  C \Big(2^{-(d-p)k}  I_\delta(u, \C_k \cup \C_{k+1}) + \delta^p \Big)^{1/p}.
\end{equation*}
This implies that 
\begin{equation*}
\left| \dashint_{\C_{k+1}} u \right| \le \left| \dashint_{\C_{k}} u \right| +   C \Big(2^{-(d-p)k}  I_\delta(u, \C_k \cup \C_{k+1}) + \delta^p \Big)^{1/p}.
\end{equation*}
Applying Lemma~\ref{lem-Holder}, we have 
\begin{equation*}
\left| \dashint_{\C_{k+1}} u \right|^p \le \frac{1 + 2^{d-p}}{2^{d-p+1}}  \left|\dashint_{\C_{k}} u \right|^p +   C \Big(2^{-(d-p)k}  I_\delta(u, \C_k \cup \C_{k+1}) + \delta^p \Big).
\end{equation*}
It follows that, with  $c = (1 + 2^{d-p})/2 < 1$, 
\begin{equation*}
2^{(d-p)(k+1)}\left| \dashint_{\C_{k+1}} u \right|^p \le  c 2^{(d-p)k}\left| \dashint_{\C_{k}} u \right|^p + C \Big(  I_\delta(u, \C_k \cup \C_{k+1}) + 2^{(d-p)k} \delta^p \Big).
\end{equation*}
We derive that 
\begin{equation}\label{thm1-part3-4}
\sum_{k = m}^{+\infty} 2^{(d-p)k} \left|\dashint_{\C_k} u\right|^p \le C  I_{\delta} (u) + C 2^{(d-p)m} \delta^p. 
\end{equation}
A combination of \eqref{thm1-part2-4} and \eqref{thm1-part3-4} yields 
\begin{equation*}
\int_{\R^d} \frac{|u(x)|^p}{|x|^p} \, dx \le  C I_{\delta} (u) + C  2^{(d-p)m} \delta^p. 
\end{equation*}
The conclusion  of $ii)$ follows.

\medskip 
\noindent
$\bullet$ {\bf Step 3:} Proof of $iii)$.  Let $\alpha > 0$.  Summing \eqref{thm1-part1} with respect to $k$ from $m$ to $n$, we obtain 
\begin{equation}\label{thm1-part2-2}
\int_{\{ 2^m < |x| <  2^{n} \}} \frac{|u(x)|^d}{|x|^d \ln^{\alpha + 1} (2 R / |x|)} \, dx \le C  \sum_{k = m}^n \frac{1}{(n - k + 1)^{\alpha + 1}} \left|\dashint_{\C_k} u\right|^d   + C I_{\delta} (u) + C (n-m)  \delta^d  . 
\end{equation}
 We also have, by \eqref{inequality-1}, for $k \in \Z$, 
\begin{equation}\label{thm1-part3-2}
\left| \dashint_{\C_{k}} u \right|  \le \left|  \dashint_{\C_{k+1}} u \right| + C \Big( I_\delta(u, \C_k \cup \C_{k+1})^{1/d} + \delta \Big).
\end{equation}
By applying  Lemma~\ref{lem-Holder} with
$$
c = \frac{(n-k+1)^\alpha}{(n-k+1/2)^\alpha}, 
$$
it follows from \eqref{thm1-part3-2}  that, for $m \le k \le n$,  
\begin{align}\label{thm1-part4-2}
 \frac{1}{(n - k + 1)^{\alpha}}  \left| \dashint_{\C_{k}} u \right|^d \le &   \frac{1}{(n - k + 1/2)^{\alpha}} \left| \dashint_{\C_{k+1}} u \right|^d \\[6pt] 
 &+ C (n - k + 1)^{d-1 - \alpha} \Big(  I_\delta(u, \C_k \cup \C_{k+1}) +  \delta^d \Big) \nonumber. 
\end{align}
We have, $m \le k \le n$, 
\begin{equation}\label{thm1-part5-2}
\frac{1}{(n - k + 1)^{\alpha}} - \frac{1}{(n - k + 3/2)^{\alpha}} \sim \frac{1}{(n - k + 1)^{\alpha + 1}}. 
\end{equation}
Taking  $\alpha = d-1$ and   combining  \eqref{thm1-part4-2} and \eqref{thm1-part5-2} yield
\begin{equation}\label{thm1-part6-2}
\sum_{k = m}^n \frac{1}{(n - k + 1)^{d}}  \left| \dashint_{\C_{k}} u \right|^d \le C I_{\delta} (u) + C  (n - m) \delta^d. 
\end{equation}
From \eqref{thm1-part2-2} and \eqref{thm1-part6-2}, we obtain 
\begin{equation*}
\int_{\{ |x| > 2^{m} \}} \frac{|u(x)|^d}{|x|^d \ln^{d} (2R/|x|)} \, dx \le  C I_{\delta} (u) + C (n - m) \delta^d. 
\end{equation*}
This implies the conclusion of $iii)$.



\medskip
\noindent
$\bullet$ {\bf Step 4} Proof of $iv)$. Let $\alpha > 0$.  Summing \eqref{thm1-part1} with respect to $k$ from $m$ to $n$, we obtain 
\begin{equation}\label{thm1-part2-3}
\int_{\{2^m < |x| <  2^{n}\}} \frac{|u(x)|^d}{|x|^d \ln^{\alpha + 1} (2 |x| / R)} \, dx \le C  \sum_{k = m}^n \frac{1}{(k-m + 1)^{\alpha + 1}} \left|\dashint_{\C_k} u\right|^d   + C I_{\delta} (u) + C   \delta^d  . 
\end{equation}
We have, by \eqref{inequality-1}, for $k \in \Z$, 
\begin{equation}\label{thm1-part3-3}
\left| \dashint_{\C_{k+1}} u \right|  \le \left|  \dashint_{\C_{k}} u \right| + C \Big( I_\delta(u, \C_k \cup \C_{k+1})^{1/d} + \delta \Big).
\end{equation}
By applying  Lemma~\ref{lem-Holder} with
$$
c = \frac{(n-k+1)^\alpha}{(n-k+1/2)^\alpha}, 
$$
it follows from   \eqref{thm1-part3-3} that, for $m \le k + 1 \le n$,  
\begin{align}\label{thm1-part4-3}
 \frac{1}{(k - m + 1)^{\alpha}}  \Big| \dashint_{\C_{k+1}} u \Big|^d & \le  \frac{1}{(k - m + 1/2)^{\alpha}} \Big| \dashint_{\C_{k}} u \Big|^d  \\
 &+ C (k- m + 1)^{d-1 - \alpha} \Big(  I_\delta(u, \C_k \cup \C_{k+1}) +  \delta^d \Big). \notag
\end{align}
We have, $m \le k + 1 \le n$, 
\begin{equation}\label{thm1-part5-3}
\frac{1}{(k - m + 1)^{\alpha}} - \frac{1}{(k - m + 3/2)^{\alpha}} \sim \frac{1}{(k - m + 1)^{\alpha + 1}}. 
\end{equation}
Taking  $\alpha = d-1$ and   combining  \eqref{thm1-part4-3} and \eqref{thm1-part5-3} yield
\begin{equation}\label{thm1-part6-3}
\sum_{k = m}^n \frac{1}{(k - m + 1)^{d}}  \Big| \dashint_{\C_{k}} u \Big|^d \le C I_{\delta} (u) + C  (n - m) \delta^d. 
\end{equation}
From \eqref{thm1-part2-3} and \eqref{thm1-part6-3}, we obtain 
\begin{equation*}
\int_{\{ 2^m < |x| <  2^{n} \}} \frac{|u(x)|^d}{|x|^d \ln^{d} (2|x|/ R)} \, dx \le  C I_{\delta} (u) + C (n - m) \delta^d. 
\end{equation*}
This implies the conclusion of $iv)$.
 
 \medskip 
The proof is complete. \qed

\section{Improved Caffarelli-Kohn-Nirenberg's inequality} 
\label{ckn}

In the proof of Theorem~\ref{thm-CKN}, we use the following result

\begin{lemma}\label{lem-S} Let $1 < p < d$, $\Omega$ be a smooth bounded open subset of $\R^d$,  and $v \in L^p(\Omega)$. We have 
\begin{equation*}
\| u \|_{L^{p^*}(\Omega)} \le C_\Omega \Big( I_\delta (u)^{1/p} + \|  u \|_{L^p} + \delta \Big),  
\end{equation*}
where $p^*: = d p / (d-p)$ denotes the Sobolev exponent of $p$. 
\end{lemma}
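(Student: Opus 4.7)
The result is a Sobolev-type inequality tailored to the nonlocal nonconvex functional $I_\delta$, mirroring the classical embedding $W^{1,p}\hookrightarrow L^{p^*}$ for $p<d$. My plan is to bootstrap the Poincar\'e estimate of Lemma~\ref{techlemma} to $L^{p^*}$ by a multiscale argument, in the spirit of the strategy developed in \cite{Ng11}.

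First I would reduce to a reference cube. Using a standard extension operator for the smooth bounded domain $\Omega$, extend $u$ to $\tilde u$ compactly supported in a cube $Q_0\supset\overline{\Omega}$, controlling $\|\tilde u\|_{L^p(\R^d)}$ by $\|u\|_{L^p(\Omega)}$ and $I_\delta(\tilde u)$ by $I_\delta(u)$ up to an additive $\delta^p$ term coming from the boundary layer. Then partition $Q_0$ dyadically at scales $\lambda_k=2^{-k}\ell(Q_0)$, and let $P_k\tilde u$ denote the piecewise constant conditional expectation at scale $\lambda_k$. Applying the rescaled Lemma~\ref{techlemma} (i.e.\ \eqref{inequality-1}) to each sub-cube of side $\lambda_k$ and summing,
$$
\|\tilde u-P_k\tilde u\|_{L^p(Q_0)}^p\le C\bigl(\lambda_k^p\,I_\delta(\tilde u)+|Q_0|\,\delta^p\bigr),
$$
so $P_k\tilde u\to\tilde u$ in $L^p(Q_0)$, and the coarsest average $P_0\tilde u$ is a constant satisfying $\|P_0\tilde u\|_{L^{p^*}(Q_0)}\le C\|\tilde u\|_{L^p(Q_0)}$.

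To upgrade from $L^p$ Poincar\'e to $L^{p^*}$ Sobolev, I would compare $P_k\tilde u$ with a smooth convolution $u_{\lambda_k}:=\tilde u*\rho_{\lambda_k}$ at the same scale and apply the classical Gagliardo--Nirenberg--Sobolev inequality to $u_{\lambda_k}$. The required bound on $\|\nabla u_{\lambda_k}\|_{L^p}$ comes from writing $\nabla u_{\lambda_k}(x)=\int\nabla\rho_{\lambda_k}(x-y)\bigl(\tilde u(y)-\tilde u(x)\bigr)\,dy$ (using $\int\nabla\rho_{\lambda_k}=0$) and splitting the integration according to whether $|\tilde u(x)-\tilde u(y)|>\delta$ or not: on the good set one appeals to Lemma~\ref{techlemma} on small unions of adjacent dyadic cubes of side $\lambda_k$, while on the complement the integrand is pointwise dominated by $\delta|\nabla\rho_{\lambda_k}|$, whose $L^1$ norm is $O(\lambda_k^{-1})$. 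Summing the resulting geometric series in $k$, which converges because $p<d$, and letting $k\to\infty$ yields the stated estimate.

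The main obstacle is precisely this last step: the functional $I_\delta$ has no coarea or isoperimetric representation, so the standard route from $L^p$ Poincar\'e to $L^{p^*}$ Sobolev is unavailable. The workaround through the smooth surrogate $u_{\lambda_k}$ must carefully track the discrepancy between $P_k\tilde u$ and $u_{\lambda_k}$ and the distribution of $\delta$-errors across scales, so that all of them can be absorbed into the single additive $\delta$ on the right-hand side.
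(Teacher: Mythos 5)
Your plan diverges from the paper in one crucial place: the paper does \emph{not} prove the $L^{p^*}$ Sobolev inequality for $I_\delta$ from scratch. It reduces, via an extension ([BHN, Lemma 17]) followed by multiplication by a cutoff $\varphi$, to a compactly supported $v=\varphi U$, carefully verifies $I_{2\delta}(v)\le C\bigl(I_\delta(u,\Omega)+\|u\|_{L^p(\Omega)}^p\bigr)$ by decomposing $v(x)-v(y)=\varphi(x)(U(x)-U(y))+U(y)(\varphi(x)-\varphi(y))$ and splitting the integration region, and then invokes the already--known Sobolev inequality $\|v\|_{L^{p^*}(\R^d)}\le C\,I_{2\delta}(v)^{1/p}+C\delta$ from [Ng11, Theorem~3]. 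That cutoff estimate is, incidentally, exactly where the additive $\|u\|_{L^p}$ term in the statement comes from, a point your sketch does not explain.

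The genuine gap in your proposal is in the multiscale step you describe as ``summing the resulting geometric series in $k$, which converges because $p<d$.'' It does not converge. At scale $\lambda_k$ the bad-set contribution to $\|\nabla u_{\lambda_k}\|_{L^p}$ is, as you yourself compute, of order $\delta\,\|\nabla\rho_{\lambda_k}\|_{L^1}\sim\delta/\lambda_k$, which \emph{grows} as $\lambda_k\to 0$. Feeding this into Gagliardo--Nirenberg--Sobolev gives $\|u_{\lambda_k}\|_{L^{p^*}}\lesssim I_\delta(\tilde u)^{1/p}+\delta/\lambda_k$, with a right-hand side that blows up as $k\to\infty$; and if one instead tries to telescope $\tilde u=u_{\lambda_0}+\sum_k(u_{\lambda_{k+1}}-u_{\lambda_k})$ and estimate each block in $L^{p^*}$ by Bernstein, one gets $\sum_k\bigl(I_\delta^{1/p}+\delta/\lambda_k\bigr)$, which diverges both in the $I_\delta$ part (infinitely many equal terms) and in the $\delta$ part (geometric growth). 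There is no scale-decoupling here; the $\delta$-errors are \emph{not} summable. This is precisely why the Sobolev inequality for $I_\delta$ is a nontrivial theorem in its own right, established in [Ng11] by a different mechanism than mollification, and why the paper cites it rather than reproving it. Unless you replace the naive mollification-at-every-scale argument by the actual proof strategy of [Ng11, Theorem~3] (or cite it), your argument does not close.
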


\begin{proof} For $\tau > 0$, let us set 
$$
\Omega_{\tau}: = \big\{x \in \R^d: \, \mbox{dist} (x, \Omega) < \tau \big\}. 
$$
Since $\Omega$ is smooth, by \cite[Lemma 17]{BHN}, there exists $\tau>0$ small enough and an extension $U$ of $u$ in $\Omega_\tau$ such that 
\begin{equation}\label{lem-S-extension}
I_\delta(U, \Omega_\tau) \le C I_{\delta} (u, \Omega) \quad \mbox{ and } \quad \| U \|_{L^p(\Omega_\tau)} \le C \| u \|_{L^p(\Omega)}, 
\end{equation}
for $0 < \delta < 1$. 
Fix such a $\tau$. Let $\varphi \in C^1(\R^d)$ such that 
$$
{\rm supp} \,\varphi \subset \Omega_{2\tau/3},\qquad \text{$\varphi = 1$\, in $\Omega_{\tau/3}$},  \qquad \text{$0 \le \varphi \le 1$\, in $\R^d$}. 
$$
Define $v = \varphi  U \mbox{ in } \R^d.$ We claim that 
\begin{equation}\label{lem-S-claim}
I_{2 \delta} (v) \le C \Big( I_\delta (u, \Omega) + \| u\|_{L^p(\Omega)}^p \Big). 
\end{equation}
Indeed, set
$$
f(x, y) = \frac{\delta^p}{|x-y|^{d + p}} \mathds{1}_{\{|v(x) - v(y)| > 2 \delta \}}. 
$$
We  estimate $I_{2\delta}(v)$. 
We have 
\begin{equation*}
\iint_{\Omega \times \R^d} f(x, y) \, dx \, dy   \le \iint_{\Omega_{\tau/3}\times \Omega_{\tau/3}} f(x, y) \, dx \, dy  +  \mathop{\iint_{\Omega_{\tau}  \times \R^d}}_{\{|x - y| > \tau/4\}} f(x, y) \, dx \, dy,
\end{equation*}
and, since $v = 0$ in $\Omega_\tau \setminus \Omega_{2 \tau /3}$, 
\begin{equation*}
\iint_{(\R^d \setminus \Omega_\tau) \times \R^d} f(x, y) \, dx \, dy  \leq
 \iint_{(\R^d \setminus \Omega_{\tau}) \times (\R^d \setminus \Omega_{\tau})}  f(x, y) \, dx \, dy  +   \mathop{\iint_{\Omega_{\tau}  \times \R^d}}_{\{|x - y| > \tau/4\}} f(x, y) \, dx \, dy, 
\end{equation*}
\begin{multline*}
\iint_{(\Omega_\tau \setminus \Omega) \times \R^d} f(x, y) \, dx \, dy \le    \iint_{(\Omega_{\tau} \setminus \Omega) \times (\Omega_{\tau} \setminus \Omega)} f(x, y) \, dx \, dy  \\[6pt]   + \iint_{\Omega_{\tau/3}\times \Omega_{\tau/3}} f(x, y) \, dx \, dy + \mathop{\iint_{\Omega_{\tau}  \times \R^d}}_{\{|x - y| > \tau/4\}} f(x, y) \, dx \, dy.
\end{multline*}
%
It is clear that, by \eqref{lem-S-extension},  
\begin{equation}\label{lem-S-part1}
\iint_{\Omega_{\tau/3}\times \Omega_{\tau/3}}  f(x, y) \, dx \, dy \le C I_\delta(u, \Omega), 
\end{equation}
by the fact that $\varphi = 0 $ in $\R^d \setminus \Omega_\tau$, 
\begin{equation}\label{lem-S-part2}
\iint_{(\R^d \setminus \Omega_{\tau}) \times (\R^d \setminus \Omega_{\tau})}  f(x, y) \, dx \, dy  = 0, 
\end{equation}
and,  by a straightforward computation, 
\begin{equation}\label{lem-S-part3}
\mathop{\iint_{\Omega_{\tau}  \times \R^d}}_{\{|x - y| > \tau/4\}} f(x, y) \, dx \, dy \le  C \delta^p. 
\end{equation}
%
We have, for $x, y \in \R^d$,  
$$
v(x) - v(y) = \varphi(x) \big(U(x) - U(y) \big) + U(y) \big(\varphi(x) - \varphi (y) \big). 
$$
It follows that if $|v(x) - v(y)| > 2 \delta$ then either 
$$ 
|U(x) - U(y)|  \ge  |\varphi(x) \big(U(x) - U(y) \big)| > \delta 
$$ 
or 
$$
C |U(y)| |x-y| \ge 
|U(y) \big(\varphi(x) - \varphi (y) \big)| > \delta. 
$$
We thus derive that 
\begin{align}\label{lem-S-part4}
 \iint_{(\Omega_{\tau} \setminus \Omega) \times (\Omega_{\tau} \setminus \Omega)} f(x, y) \, dx \, dy &\le \mathop{\int_{(\Omega_{\tau} \setminus \Omega) } \int_{(\Omega_{\tau} \setminus \Omega) }}_{\{|U(x) - U(y)| > \delta \}} \frac{\delta^p}{|x - y|^{d + p}} \, dx \, dy \\
& + 
 \mathop{\int_{(\Omega_{\tau} \setminus \Omega) } \int_{(\Omega_{\tau} \setminus \Omega) }}_{\{|x -y| > C \delta/ |U(y)| \}} \frac{\delta^p}{|x - y|^{d + p}} \, dx \, dy.  \notag
 \end{align}
A straightforward computation yields 
\begin{equation*}
\mathop{\int_{(\Omega_{\tau} \setminus \Omega) } \int_{(\Omega_{\tau} \setminus \Omega) }}_{\{|x -y| > C \delta/ |U(y)| \}} \frac{\delta^p}{|x - y|^{d + p}} \, dx \, dy  \le \int_{\Omega_\tau} \, dy \int_{ \{ |x-y| > C \delta / |U(y)|\}} \frac{\delta^p}{|x-y|^{d + p}} \, dx  =  C \int_{\Omega_\tau} |U(y)|^p \, dy. 
\end{equation*}
Using \eqref{lem-S-extension}, we deduce  from \eqref{lem-S-part4} that 
\begin{equation}\label{lem-S-part6}
 \iint_{(\Omega_{\tau} \setminus \Omega) \times (\Omega_{\tau} \setminus \Omega)} f(x, y) \, dx \, dy \le  C I_\delta(u, \Omega) + C \| u \|_{L^p(\Omega)}^p 
 \end{equation}
A combination of \eqref{lem-S-part1}, \eqref{lem-S-part2}, \eqref{lem-S-part3}, and \eqref{lem-S-part6} yields Claim~\eqref{lem-S-claim}. 
By applying \cite[Theorem 3]{Ng11} and using the fact $\supp v \subset \Omega_\tau$, we have 
\begin{equation}\label{p>1}
\| v\|_{L^{p^*}(\R^d)} \le C I_{2 \delta} (v)^{1/p} + C \delta.  
\end{equation}
The conclusion now follows from Claim~\eqref{lem-S-claim}. 
\end{proof}

\begin{remark} \rm The assumption $p>1$ is required in \eqref{p>1}.
\end{remark}

As a consequence of Lemmas~\ref{techlemma} and \ref{lem-S}, we obtain 
\begin{corollary}\label{cor-S-P-1}
Let $d\geq 2$, $1 <   p <  d$, $0 < r < R$, and $\lambda > 0$,  and set 
$$
\lambda D: =\big\{\lambda x \in \R^d: r  < |x| < R \big\}.
$$
We have, for $1 \le q \le p^*$,  
\begin{equation*}
\left( \dashint_{\lambda D} \left|u(x) - \dashint_{\lambda D} u\right|^q \, dx \right)^{1/q} \le C_{r, R} \Big( \lambda^{p-d} I_\delta(u, \lambda D) + \delta^p \Big)^{1/p}, \quad\mbox{for $u \in L^p(\lambda D)$}, 
\end{equation*}
where   $C_{r, R}$ denotes a positive constant independent of $u$, $\delta$, and $\lambda$. 
\end{corollary}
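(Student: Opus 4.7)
The plan is to reduce the statement by scaling to the case $\lambda=1$, then establish the endpoint $q=p^*$ by combining Lemma~\ref{techlemma} with Lemma~\ref{lem-S}, and finally recover the full range $1\le q\le p^*$ by H\"older's inequality.

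For the scaling reduction I would set $u_\lambda(x):=u(\lambda x)$ and change variables in both sides. A short computation gives
\begin{equation*}
\lambda^{p-d}\, I_\delta(u,\lambda D)=I_\delta(u_\lambda,D),
\qquad
\dashint_{\lambda D} u \;=\; \dashint_{D} u_\lambda,
\qquad
\dashint_{\lambda D}\Big|u-\dashint_{\lambda D}u\Big|^q dx=\dashint_{D}\Big|u_\lambda-\dashint_{D}u_\lambda\Big|^q dx,
\end{equation*}
so the inequality on $\lambda D$ is equivalent to the same inequality for $u_\lambda$ on $D$. Hence it suffices to prove the estimate for $\lambda=1$, and the factor $\lambda^{p-d}$ in front of $I_\delta(u,\lambda D)$ appears automatically from the first identity above.

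The endpoint case $q=p^*$ is the heart of the argument. Set $v:=u-\dashint_{D} u\in L^p(D)$. By Lemma~\ref{techlemma} one has
\begin{equation*}
\|v\|_{L^p(D)}^p\le C_{r,R}\bigl(I_\delta(u,D)+\delta^p\bigr),
\end{equation*}
and since subtracting a constant leaves differences unchanged, $I_\delta(v,D)=I_\delta(u,D)$. Because the annulus $D$ is a smooth bounded open set, Lemma~\ref{lem-S} applies to $v$ and yields
\begin{equation*}
\|v\|_{L^{p^*}(D)}\le C_{D}\bigl(I_\delta(v,D)^{1/p}+\|v\|_{L^p(D)}+\delta\bigr)\le C_{r,R}\bigl(I_\delta(u,D)+\delta^p\bigr)^{1/p}.
\end{equation*}
Dividing by $|D|^{1/p^*}$ produces the averaged form, which is exactly the claim for $q=p^*$. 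For an arbitrary $1\le q\le p^*$, H\"older's inequality on the normalized space $(D,dx/|D|)$ gives at once $\bigl(\dashint_{D}|v|^q\bigr)^{1/q}\le\bigl(\dashint_{D}|v|^{p^*}\bigr)^{1/p^*}$, so the $q=p^*$ bound transfers to the full range.

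I do not anticipate a substantial obstacle. The only subtlety is verifying that the constant $C_{r,R}$ genuinely depends only on $r,R$ (and not on $\lambda$ or $\delta$): this is automatic, since after the scaling step the base domain $D$ is fixed once and for all, and both Lemmas~\ref{techlemma} and \ref{lem-S} produce constants that are independent of $\delta$ and depend only on the shape of $D$.
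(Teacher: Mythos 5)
Your proof is correct and fills in precisely the details that the paper leaves implicit when it states that the corollary is a consequence of Lemmas~\ref{techlemma} and \ref{lem-S}. The scaling identity $\lambda^{p-d}I_\delta(u,\lambda D)=I_\delta(u_\lambda,D)$, the observation that $I_\delta$ is unchanged under subtraction of constants so that Lemma~\ref{lem-S} can be applied to $v=u-\dashint_D u$, the absorption of the $\|v\|_{L^p(D)}$ and $\delta$ terms via Lemma~\ref{techlemma}, and the Jensen step for $1\le q\le p^*$ are all exactly the intended route; nothing is missing.
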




Here is an application of Corollaries~\ref{cor-S-P-1}  which plays a crucial role in the proof of Theorem~\ref{CKN-g} below.

\begin{lemma}\label{lem-S-P-3} Let $d \ge 1$, $1 < p <  d$, $q \ge 1$, $\rr > 0$, and $0 \le a \le 1$ be such that
$$
\frac{1}{\rr} \ge a \left(\frac{1}{p} - \frac{1}{d} \right) + \frac{1-a}{q}. 
$$
Let $0 < r < R$, and $\lambda > 0$  and set 
$$
\lambda D: =\big\{\lambda x \in \R^d: r  < |x| < R \big\}.
$$
Then, for $u \in L^1(\lambda D)$, 
\begin{equation*}
\left(\dashint_{\lambda D} \Big| u -  \dashint_{\lambda D} u \Big|^{\rr} \, dx  \right)^{1/\rr} \le C  \Big(\lambda^{p-d} I_\delta (u, \lambda D) + \delta^p \Big)^{a/p} 
\left(\dashint_{\lambda D} \left| u -  \dashint_{\lambda D} u \right|^{q} \, dx  \right)^{(1-a)/q} , 
\end{equation*}
for some positive constant $C$ independent of $u$, $\lambda$, and $\delta$. 
\end{lemma}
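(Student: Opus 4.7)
The plan is to derive the estimate by interpolating the Sobolev-type bound of Corollary~\ref{cor-S-P-1} against the $L^q$ norm via Hölder, and then descend from the critical Lebesgue exponent down to $\rr$ via Jensen. Throughout, write $\bar u := \dashint_{\lambda D} u$ and $p^\ast := dp/(d-p)$ for the Sobolev exponent of $p$.

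First I would introduce the critical exponent $\rr_0 > 0$ defined by
\[
\frac{1}{\rr_0} = \frac{a}{p^\ast} + \frac{1-a}{q},
\]
which corresponds to equality in the hypothesis of the lemma. Because $0 \le a \le 1$, the two positive weights $\frac{a\rr_0}{p^\ast}$ and $\frac{(1-a)\rr_0}{q}$ sum to $1$ and individually lie in $[0,1]$, so Hölder's inequality applied with respect to the probability measure $\dashint_{\lambda D}$ yields the interpolation
\[
\left(\dashint_{\lambda D} \bigl| u - \bar u \bigr|^{\rr_0} \, dx\right)^{1/\rr_0} \le \left(\dashint_{\lambda D} \bigl|u - \bar u\bigr|^{p^\ast} \, dx\right)^{a/p^\ast} \left(\dashint_{\lambda D} \bigl|u - \bar u\bigr|^{q} \, dx\right)^{(1-a)/q}.
\]
I would then invoke Corollary~\ref{cor-S-P-1} with exponent $p^\ast$ (admissible since $1 < p < d$) to bound the first factor on the right-hand side by $C \bigl(\lambda^{p-d} I_\delta(u, \lambda D) + \delta^p\bigr)^{1/p}$, and raising that estimate to the $a$-th power already proves the lemma in the critical case $\rr = \rr_0$.

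To treat the subcritical range (i.e. strict inequality $1/\rr > 1/\rr_0$, equivalently $\rr < \rr_0$), I would use Jensen's inequality for the finite-measure average: since $\rr_0/\rr \ge 1$, the function $t \mapsto t^{\rr_0/\rr}$ is convex on $[0,\infty)$, so
\[
\left(\dashint_{\lambda D} \bigl|u - \bar u\bigr|^\rr \, dx\right)^{1/\rr} \le \left(\dashint_{\lambda D} \bigl|u - \bar u\bigr|^{\rr_0} \, dx\right)^{1/\rr_0},
\]
and feeding this into the critical-case bound yields the desired inequality. The finiteness $\rr_0 < \infty$ is automatic, since $a$ and $1-a$ cannot both vanish. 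The scale-invariance built into Corollary~\ref{cor-S-P-1} (through the factor $\lambda^{p-d}$) together with the fact that Hölder and Jensen do not introduce $\lambda$-dependent constants ensures $C$ is independent of $\lambda$, $u$, and $\delta$. I do not anticipate a serious obstacle: the only point requiring any care is confirming the admissibility of Hölder's exponents across the degenerate boundary cases $a=0$ and $a=1$, which reduce either to pure Jensen ($a=0$, with $\rr_0 = q$) or to a direct application of Corollary~\ref{cor-S-P-1} ($a=1$, with $\rr_0 = p^\ast$).
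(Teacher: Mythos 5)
Your proposal is correct and follows essentially the same route as the paper: the paper simply invokes the standard interpolation inequality $\bigl(\dashint|f|^{\rr}\bigr)^{1/\rr}\le\bigl(\dashint|f|^{\sigma}\bigr)^{a/\sigma}\bigl(\dashint|f|^{t}\bigr)^{(1-a)/t}$ under $1/\rr\ge a/\sigma+(1-a)/t$ with $\sigma=p^*$, $t=q$, and then applies Corollary~\ref{cor-S-P-1}. You are merely unpacking that interpolation inequality into its two constituent steps (Hölder at the critical exponent $\rr_0$, then Jensen to descend to $\rr<\rr_0$), which is exactly what "standard interpolation inequality" means here.
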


\begin{proof}
Let $\rr, \sigma, \, t  > 0$, be such that 
$$
\frac{1}{\rr} \ge \frac{a}{\sigma} + \frac{1-a}{t}. 
$$
We have, by  a standard interpolation inequality, that
\begin{equation*}
\left(\dashint_{\lambda D} \Big| u -  \dashint_{\lambda D} u \Big|^{\rr} \, dx  \right)^{1/\rr} \le \left(\dashint_{\lambda D} \Big| u -  \dashint_{\lambda D} u \Big|^{\sigma} \, dx  \right)^{a/\sigma} 
\left(\dashint_{\lambda D} \Big| u -  \dashint_{\lambda D} u \Big|^{t} \, dx  \right)^{(1-a)/t}. 
\end{equation*}
Applying this inequality with $\sigma = p^*$ and $t = q$ and using 
Corollary~\ref{cor-S-P-1}, one obtains the conclusion. 
\end{proof}

We also have, see \cite[Theorem on page 125 and the following remarks]{Nirenberg58}  
 
\begin{lemma}[Nirenberg's interpolation inequality] \label{lem-S-P-4} Let $d \ge 1$, $p \ge 1$, $q \ge 1$, $\rr > 0$, and $0 \le a \le 1$ be such that
$$
\frac{1}{\rr} \ge a \left(\frac{1}{p} - \frac{1}{d} \right) + \frac{1-a}{q}. 
$$
Let $0 < r < R$, and $\lambda > 0$  and set 
$$
\lambda D: =\big\{\lambda x \in \R^d: r  < |x| < R \big\}.
$$
Then, for $u \in L^1(\lambda D)$, 
\begin{equation*}
\left(\dashint_{\lambda D} \left| u -  \dashint_{\lambda D} u \right|^{\rr} \, dx  \right)^{1/\rr} \le C\| \nabla u\|_{L^p(\lambda D)}^a
C\| u \|_{L^q(\lambda D)}^{1-a},
\end{equation*}
for some positive constant $C$ independent of $u$, $\lambda$, and $\delta$. 
\end{lemma}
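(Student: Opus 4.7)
The statement is Nirenberg's classical interpolation inequality applied to the annulus $\lambda D$; my plan is to first settle the case $\lambda = 1$ via the standard Sobolev--Poincar\'e / H\"older assembly, and then recover the general case by rescaling $v(x) := u(\lambda x)$ onto the fixed domain $D$.

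For $\lambda = 1$, introduce the auxiliary exponent $\tilde{\rr}$ with $1/\tilde{\rr} = a(1/p - 1/d) + (1-a)/q$ (taking $\tilde{\rr} = \infty$ when this quantity is nonpositive). In the regime $1 \le p < d$ the Sobolev--Poincar\'e inequality on the bounded Lipschitz annulus $D$ gives
$$
\|v - \bar v\|_{L^{p^\star}(D)} \le C\|\nabla v\|_{L^p(D)};
$$
combining this with the multiplicative H\"older interpolation $\|w\|_{L^{\tilde{\rr}}(D)} \le \|w\|_{L^{p^\star}(D)}^a \|w\|_{L^q(D)}^{1-a}$ applied to $w = v - \bar v$, and with the trivial bound $\|v - \bar v\|_{L^q(D)} \le 2\|v\|_{L^q(D)}$, one obtains the estimate at exponent $\tilde{\rr}$. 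The cases $p = d$ and $p > d$ are analogous, replacing $p^\star$ by any large finite exponent or by $L^\infty$ via Morrey's embedding. Since the hypothesis $1/\rr \ge 1/\tilde{\rr}$ means $\rr \le \tilde{\rr}$, Jensen's inequality for averages absorbs the remaining gap at no cost, without introducing a factor depending on $|D|$.

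To pass from $\lambda = 1$ to arbitrary $\lambda > 0$, apply the fixed-scale inequality to $v(x) := u(\lambda x)$ on $D$ and perform the substitution $y = \lambda x$. A direct computation yields
$$
\biggl(\dashint_{\lambda D} \Bigl| u - \dashint_{\lambda D} u \Bigr|^\rr \, dy\biggr)^{1/\rr} = \biggl(\dashint_D | v - \bar v |^\rr \, dx\biggr)^{1/\rr},
$$
while $\|\nabla v\|_{L^p(D)} = \lambda^{(p-d)/p}\|\nabla u\|_{L^p(\lambda D)}$ and $\|v\|_{L^q(D)} = \lambda^{-d/q}\|u\|_{L^q(\lambda D)}$. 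The defining relation $1/\tilde{\rr} = a(1/p - 1/d) + (1-a)/q$ is exactly the algebraic balance that cancels all $\lambda$-powers on the right-hand side; this is the classical scale invariance of Nirenberg's inequality.

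The main subtlety, as I see it, is keeping the gap step uniform in $\lambda$ when $1/\rr > 1/\tilde{\rr}$ strictly: this is immediate, since Jensen's inequality for averages depends only on the exponents and not on the measure $|\lambda D|$. Everything else is contained in \cite{Nirenberg58}, so the argument reduces to spelling out this scaling reduction after citing Nirenberg's classical statement.
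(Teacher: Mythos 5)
Your fixed‑scale argument on $D$ (Sobolev--Poincar\'e, H\"older interpolation between $p^*$ and $q$, then Jensen to pass from the auxiliary exponent $\tilde{\rr}$ down to $\rr$) is fine, and the paper itself offers no proof here beyond citing Nirenberg, so supplying one is reasonable. The problem is the rescaling step: the $\lambda$-powers do \emph{not} cancel. With $v(x)=u(\lambda x)$ you correctly record $\|\nabla v\|_{L^p(D)}=\lambda^{1-d/p}\|\nabla u\|_{L^p(\lambda D)}$ and $\|v\|_{L^q(D)}=\lambda^{-d/q}\|u\|_{L^q(\lambda D)}$, so the fixed-scale inequality applied to $v$ yields
\begin{equation*}
\left(\dashint_{\lambda D}\Big|u-\dashint_{\lambda D}u\Big|^{\rr}\right)^{1/\rr}\le C\,\lambda^{\,a(1-d/p)-(1-a)d/q}\,\|\nabla u\|_{L^p(\lambda D)}^{a}\|u\|_{L^q(\lambda D)}^{1-a},
\end{equation*}
and the exponent equals $-d/\tilde{\rr}$ with $1/\tilde{\rr}=a(1/p-1/d)+(1-a)/q$; it vanishes only in the degenerate case $1/\tilde{\rr}=0$, not as a consequence of the definition of $\tilde{\rr}$. (A one-dimensional check makes this concrete: $d=1$, $p=\rr=2$, $a=1$ gives the Poincar\'e inequality on an interval of length $\sim\lambda$, whose sharp constant grows like $\lambda^{1/2}=\lambda^{-d/\tilde{\rr}}$.) So your argument does not prove the inequality with a $\lambda$-independent constant and, as literally written, no argument can: the two sides of the stated inequality scale differently.

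What survives rescaling is the normalized form
\begin{equation*}
\left(\dashint_{\lambda D}\Big|u-\dashint_{\lambda D}u\Big|^{\rr}\right)^{1/\rr}\le C\Big(\lambda^{p-d}\|\nabla u\|_{L^p(\lambda D)}^{p}\Big)^{a/p}\left(\dashint_{\lambda D}|u|^{q}\right)^{(1-a)/q},
\end{equation*}
in which both sides are scale invariant; this is the exact analogue of Lemma~\ref{lem-S-P-3} (note the factor $\lambda^{p-d}$ in front of $I_\delta$ and the averaged $L^q$ term there), and it is the form actually used in the proof of Theorem~\ref{CKN-g}, cf.\ \eqref{CKN-claim}. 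Your two-step plan proves precisely this version verbatim once you keep track of the powers, so the fix is to prove the normalized statement rather than the literal one. Two further small points: the Jensen step from $\tilde{\rr}$ to $\rr$ should be applied to the averaged integrals on the left (where it is indeed free of $|\lambda D|$), not to the unaveraged $L^q$ norm on the right; and for $d=1$ the set $D$ is disconnected, so the mean-value Poincar\'e--Sobolev inequality you invoke fails when $a=1$ -- your argument really requires $d\ge 2$ (or $a<1$).
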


We prove the following  more general version of Theorem~\ref{thm-CKN}:

\begin{theorem}\label{CKN-g} Let $p \ge 1$, $q \ge 1$, $\rr>0$, $0 <   a \le   1$, $\alpha, \, \beta, \, \gamma \in \R$ be such that 
\begin{equation}\label{CKN-balance}
\frac{1}{\rr} + \frac{\gamma}{d} = a \Big(\frac{1}{p} + \frac{\alpha - 1}{d} \Big) + (1-a) \Big( \frac{1}{q} + \frac{\beta}{d} \Big), 
\end{equation}
and, with $\gamma = a \sigma + (1 -a) \beta$, 
\begin{equation*}
0 \le  \alpha - \sigma \le 1.   
\end{equation*}
Set, for $k \in \Z$,  
\begin{equation}
I_\delta(k, u): = \left\{\begin{array}{cl} I_\delta(u, \C_k \cup \C_{k+1}, \alpha) + 2^{k (\alpha p + d - p)}\delta^p & \mbox{ if } 1 < p < d, \\[6pt]
\| |x|^\alpha \nabla u\|_{L^p(\C_k \cup \C_{k+1})}^p & \mbox{ otherwise}. 
\end{array}\right. 
\end{equation}
We have, for $u \in L^p_{\loc}(\R^d)$ and $m, n \in \Z$ with $m < n$, 
\begin{enumerate}

\item[i)] if $1/ \rr + \gamma/ d > 0$ and  $\supp u \subset B_{2^n}$,  then 
\begin{equation*}
\left(\int_{\R^d \setminus B_{2^m}}  |x|^{\gamma \rr}|u|^{\rr} \, dx \right)^{1/\rr} \le C\left( \sum_{k = m-1}^n I_\delta (k, u)  \right)^{a/p} \| |x|^\beta u   \|_{L^q(\R^d)}^{(1-a)}, 
\end{equation*}

\item[ii)] if $1/ \rr + \gamma/ d <  0$ and  $\supp u \subset \R^d \setminus B_{2^m}$,  then 
\begin{equation*}
\left(\int_{B_{2^n}}   |x|^{\gamma \rr} |u|^{\rr} \, dx \right)^{1/\rr} \le C\left(  \sum_{k = m-1}^n I_\delta (k, u) \right)^{a/p} \| |x|^\beta u   \|_{L^q(\R^d)}^{(1-a)}, 
\end{equation*}

\item[iii)] if $1/ \rr + \gamma/ d = 0$, $\rr > 1$,  and  $\supp u \subset B_{2^n}$,  then 
\begin{equation*}
\left(\int_{\R^d \setminus B_{2^m}}  \frac{ |x|^{\gamma \rr}}{\ln^{\rr}(2^{n+1}/ |x|)} |u|^{\rr} \, dx \right)^{1/\rr} \le C\left( \sum_{k = m-1}^n I_\delta (k, u)  \right)^{a/p} \| |x|^\beta u   \|_{L^q(\R^d)}^{(1-a)}, 
\end{equation*}

\item[iv)] if $1/ \rr + \gamma/ d = 0$, $\rr > 1$,  and  $\supp u \subset \R^d \setminus B_{2^m}$,  then 
\begin{equation*}
\left(\int_{B_{2^n}}  \frac{ |x|^{\gamma \rr}}{\ln^\rr(2^{n+1}/ |x|)} |u|^{\rr} \, dx \right)^{1/\rr} \le C\left(  \sum_{k = m-1}^n I_\delta (k, u)\right)^{a/p} \| |x|^\beta u   \|_{L^q(\R^d)}^{(1-a)}. 
\end{equation*}

\end{enumerate}
Here $C$ denotes a positive constant independent of $u$, $\delta$, $k$, $n$, and $m$.
\end{theorem}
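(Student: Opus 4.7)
The plan is to follow and extend the scheme developed for Theorem~\ref{thm-Hardy}, now with a weighted interpolation coming from Lemma~\ref{lem-S-P-3} (when $1 < p < d$) or from Nirenberg's inequality Lemma~\ref{lem-S-P-4} (in the remaining cases). The starting point is the decomposition $\R^d = \bigcup_{k \in \Z} \C_k$, on each of which $|x|^{\gamma\rr} \sim 2^{k\gamma\rr}$ and $|\C_k| \sim 2^{kd}$, so that
\begin{equation*}
\int_{\C_k} |x|^{\gamma\rr} |u|^\rr \, dx \le C\, 2^{k(\gamma\rr + d)} \dashint_{\C_k} |u|^\rr \, dx.
\end{equation*}
I would then split the right-hand side via the triangle inequality into an oscillation part $\dashint_{\C_k} |u - \bar u_k|^\rr \, dx$ and an average part $|\bar u_k|^\rr$, where $\bar u_k := \dashint_{\C_k} u$.

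For the oscillation part, apply Lemma~\ref{lem-S-P-3} (resp.\ Lemma~\ref{lem-S-P-4}) to $\lambda D$ with $\lambda = 2^k$ and $D = \{1 \le |x| < 2\}$. Since $|x|^{p\alpha} \sim 2^{kp\alpha}$ on $\C_k$, the weighted quantity $I_\delta(u, \C_k, \alpha)$ scales as $2^{k(d-p+p\alpha)}$ times the unweighted energy on $D$; similarly $\||x|^\beta u\|_{L^q(\C_k)}$ carries a factor $2^{k(\beta + d/q)}$. A direct computation shows that, thanks to the balance condition \eqref{CKN-balance}, all powers of $2^k$ cancel and one obtains
\begin{equation*}
2^{k(\gamma\rr + d)} \dashint_{\C_k} |u - \bar u_k|^\rr \, dx \le C \, I_\delta(k, u)^{a\rr/p} \, \||x|^\beta u\|_{L^q(\C_k \cup \C_{k+1})}^{(1-a)\rr};
\end{equation*}
the correction $2^{k(\alpha p + d - p)}\delta^p$ built into $I_\delta(k, u)$ is precisely what absorbs the $\delta^p$ remainder from Lemma~\ref{lem-S-P-3}. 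Summing over $k$ and applying H\"older's inequality in the index $k$ (the required $a\rr/p + (1-a)\rr/q \ge 1$ follows from $\alpha - \sigma \le 1$ combined with the balance identity) yields the target bound for the oscillation contribution.

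For the average part, the plan is to re-run the telescoping argument from the proof of Theorem~\ref{thm-Hardy}. A second application of Lemma~\ref{lem-S-P-3} on $\C_k \cup \C_{k+1}$ gives the same kind of bound for $|\bar u_k - \bar u_{k+1}|^\rr$. In case i), one has $\gamma\rr + d > 0$, and the scaling constant $2^{\gamma\rr + d} > 1$ allows, via Lemma~\ref{lem-Holder}, to absorb $|\bar u_{k+1}|^\rr$ into a strict-contraction recursion that telescopes from $k = m-1$ up to $k = n$, using $\bar u_n = 0$ (from $\supp u \subset B_{2^n}$) as the closing boundary term. Case ii) is symmetric, telescoping in the opposite direction. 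In the borderline cases iii) and iv), $\gamma\rr + d = 0$ so geometric telescoping degenerates; mimicking Steps~3 and~4 of the proof of Theorem~\ref{thm-Hardy}, I would insert a slowly varying logarithmic weight $(n-k+1)^{-\alpha}$ (resp.\ $(k-m+1)^{-\alpha}$) with $\alpha = \rr - 1$, apply Lemma~\ref{lem-Holder} with $c = (n-k+1)^\alpha/(n-k+1/2)^\alpha$, and sum the resulting telescoping identity to produce exactly the $\ln^\rr$ denominator on the left-hand side.

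The main technical burden I anticipate is the exponent bookkeeping: every step requires tracking simultaneously the powers of $2^k$ generated by the weight $|x|^{\gamma\rr}$, by the rescaling of $I_\delta(u, \cdot, \alpha)$, by the $L^q$-weight $|x|^\beta$, and by the residual $\delta^p$, and then verifying that they combine as dictated by \eqref{CKN-balance}. A secondary subtlety is that when $\alpha - \sigma < 1$ strictly one has $a\rr/p + (1-a)\rr/q > 1$, so the final H\"older step is not sharp; this is harmless because one may either apply H\"older with a slack exponent, or first establish the endpoint $\alpha - \sigma = 1$ and interpolate with a trivial $L^q$ bound to recover the general case.
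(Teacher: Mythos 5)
Your proposal follows essentially the same route as the paper: dyadic decomposition into the annuli $\C_k$, the split of $\int_{\C_k}|x|^{\gamma\rr}|u|^\rr$ into an oscillation term controlled by Lemma~\ref{lem-S-P-3} (resp.\ Lemma~\ref{lem-S-P-4} when $p\notin(1,d)$) and an average term $2^{(\gamma\rr+d)k}|\bar u_k|^\rr$, the telescoping recursion via Lemma~\ref{lem-Holder} (geometric contraction for $\gamma\rr+d\neq 0$, logarithmic weights $(n-k+1)^{-(\rr-1)}$ in the borderline cases), and the discrete H\"older step in $k$ using $a\rr/p+(1-a)\rr/q\ge 1$, which you correctly trace back to $\alpha-\sigma\le 1$ and the balance condition. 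The only cosmetic deviation is your remark on handling the non-sharp case $\alpha-\sigma<1$: the paper simply invokes the elementary inequality $\sum_k x_k^s y_k^t\le C(\sum_k x_k)^s(\sum_k y_k)^t$ valid whenever $s+t\ge 1$, which is exactly your ``H\"older with a slack exponent'' option.
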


\begin{proof} We only present the proof in the case $1 < p < d$. The proof for the other case follows similarly, however instead of using Lemma~\ref{lem-S-P-3}, one applies Lemma~\ref{lem-S-P-4}.  We now assume that $1 < p < d$. Since $\alpha - \sigma \ge 0$,  by Lemma~\ref{lem-S-P-3}, we have
\begin{equation}\label{CKN-claim}
\left(\dashint_{\C_k} \left| u - \dashint_{\C_k} u  \right|^\rr \, dx \right)^{1/\rr} \le C \Big(2^{-(d-p) k} I_\delta(u, \C_k) + \delta^p \Big)^{a/p}  \left( \dashint_{\C_k}|u|^q\right)^{(1-a)/q} . 
\end{equation}
Using \eqref{CKN-balance}, we derive from \eqref{CKN-claim} that
\begin{equation}\label{CKN-part1}
\int_{\C_k} |x|^{\gamma \rr}|u|^\rr \, dx  \le C 2^{(\gamma \rr + d) k }\Big|\dashint_{\C_k} u \Big|^\rr + C  \Big( I_\delta(u, \C_k, \alpha) + 2^{k (\alpha p + d - p)}\delta^p \Big)^{a \rr /p} \| |x|^\beta u   \|_{L^q(\C_k)}^{(1-a)\rr}. 
\end{equation}

\noindent
$\bullet$ {\bf Step 1}: Proof of $i)$. 
Summing \eqref{CKN-part1} with respect to $k$ from $m$ to $n$, we obtain 
\begin{align}\label{CKN-part2}
\int_{\{ |x| > 2^{m} \}}  |x|^{\gamma \rr}|u|^\rr \, dx  \le & C \sum_{k = m}^n 2^{(\gamma \rr + d)k} \left|\dashint_{\C_k} u\right|^{\rr}  \\[6pt]   
+ & C \sum_{k = m}^n  \Big( I_\delta(u, \C_k, \alpha) + 2^{k (\alpha p + d - p)}\delta^p \Big)^{a \rr /p} \| |x|^\beta u   \|_{L^q(\C_k)}^{(1-a) \rr}.  \nonumber
\end{align}
By Lemma~\ref{lem-S-P-3}, we have
\begin{equation*}
\left| \dashint_{\C_{k}} u \right| \le \left| \dashint_{\C_{k+1}} u \right| +  C \Big(2^{-(d-p) k} I_\delta(u, \C_k \cup \C_{k+1}) + \delta^p \Big)^{a/p}  \left( \dashint_{\C_k \cup \C_{k+1}}|u|^q\right)^{\frac{1-a}{q}}. 
\end{equation*}
Applying Lemma~\ref{lem-Holder}, we derive that 
\begin{equation*}
\left| \dashint_{\C_{k}} u \right|^\rr \le \frac{2^{\gamma \rr + d + 1}}{1 + 2^{\gamma \rr + d}} \left|\dashint_{\C_{k+1}} u \right|^\rr +  C \Big(2^{-(d-p) k} I_\delta(u, \C_k \cup \C_{k+1}) + \delta^p \Big)^{a \rr /p}  \left( \dashint_{\C_k \cup \C_{k+1}}|u|^q\right)^{\frac{(1-a)\rr}{q}}. 
\end{equation*}
It follows that, with  $c = 2 / (1 + 2^{\gamma \rr + d}) < 1$, 
\begin{align*}
2^{(\gamma \rr + d)k}\left| \dashint_{\C_{k}} u \right|^\rr \le &   c 2^{(\gamma \rr + d)(k+1)}\left| \dashint_{\C_{k+1}} u \right|^\rr  \\[6pt]
+ &  C \Big( I_\delta(u, \C_k \cup \C_{k+1}, \alpha) + 2^{k (\alpha p + d - p)}\delta^p \Big)^{a \rr/p} \| |x|^\beta u   \|_{L^q(\C_k \cup \C_{k+1})}^{(1-a)\rr}. 
\end{align*}
This yields
\begin{equation}\label{CKN-part3}
\sum_{k = m}^n 2^{(\gamma \rr + d)k}\left| \dashint_{\C_{k}} u \right|^\rr  \le C \sum_{k = m}^{n}  \Big( I_\delta(u, \C_k \cup \C_{k+1}, \alpha) + 2^{k (\alpha p + d - p)}\delta^p \Big)^{a \rr /p} \| |x|^\beta u   \|_{L^q(\C_k \cup \C_{k+1})}^{(1-a)\rr}. 
\end{equation}
Combining \eqref{CKN-part2} and \eqref{CKN-part3} yields 
\begin{multline}\label{CKN-part3-*}
\int_{\{ |x| > 2^{m} \}}  |x|^{\gamma \rr}|u|^\rr \, dx  \\[6pt] 
\le C \sum_{k = m-1}^n  \Big( I_\delta(u, \C_k \cup \C_{k+1}, \alpha) + 2^{k (\alpha p + d - p)}\delta^p \Big)^{a \rr /p} \| |x|^\beta u   \|_{L^q(\C_k \cup \C_{k+1})}^{(1-a) \rr}.
\end{multline}
Applying the inequality, for $s \ge 0$, $t \ge 0$ with $s + t \ge 1$, and for $x_k \ge 0$ and $y_k \ge 0$,  
$$
\sum_{k = m}^n x_k^s y_k^t \le C_{s, t} \Big(\sum_{k = m}^n x_k \Big)^s \Big(\sum_{k = m}^n y_k \Big)^t,  
$$
to $s =  a \rr /p$ and $t = (1-a) \rr / q$,  we obtain from \eqref{CKN-part3-*} that
\begin{equation}\label{CKN-part4}
\int_{\{ |x|  > 2^{m} \}}  |x|^{\gamma \rr}|u|^\rr \, dx  \le C  \left( \sum_{k = m}^n I_{\delta}(k, u) \right)^{a \rr /p} \| |x|^\beta u   \|_{L^q(\R^d)}^{(1-a)\rr}
\end{equation}
since  $
a / p + (1 -a)/q  \ge 1/ \rr \mbox{ thanks to the fact } \alpha - \sigma - 1 \le 0. 
$

\medskip 
\noindent
$\bullet$ {\bf Step 2}: Proof of $ii)$. The proof is in the spirit of the proof of $ii)$ of Theorem~\ref{thm-Hardy}. The details are left to the reader.

\medskip 
\noindent
$\bullet$ {\bf Step 3:} Proof of $iii)$. Fix $\xi > 0$. 
Summing \eqref{CKN-part1} with respect to $k$ from $m$ to $n$, we obtain 
\begin{multline}\label{CKN-part1-2}
\int_{\{ |x| > 2^{m} \}} \frac{1}{\ln^{1 + \xi} (\rr/ |x|)} |x|^{\gamma \rr}|u|^\rr \, dx  \\[6pt]
\le C \sum_{k = m}^n \frac{1}{(n-k+1)^{1 + \xi}} \left|\dashint_{\C_k} u\right|^\rr   + C \sum_{k = m}^n  \Big( I_\delta(u, \C_k, \alpha) + 2^{k (\alpha p + d - p)}\delta^p \Big)^{a \rr /p} \| |x|^\beta u   \|_{L^q(\C_k)}^{(1-a)\rr}. 
\end{multline}
By Lemma~\ref{lem-S-P-3}, we have
\begin{equation*}
\left| \dashint_{\C_{k}} u \right|  \le  \left|\dashint_{\C_{k+1}} u \right| + C \Big(2^{-(d-p) k} I_\delta(u, \C_k \cup \C_{k+1}) + \delta^p \Big)^{a/p}  \left( \dashint_{\C_k \cup \C_{k+1}}|u|^q\right)^{\frac{1-a}{q}}. 
\end{equation*}
Applying Lemma~\ref{lem-Holder} with 
$$
c = \frac{(n-k+1)^\xi}{ (n-k+1/2)^\xi},
$$
 we deduce that
\begin{multline}\label{CKN-part2-2}
\frac{1}{(n - k + 1)^{\xi}} \left| \dashint_{\C_{k}} u \right|^{\rr} \le \frac{1}{(n - k + 1/2)^{\xi}}  \left| \dashint_{\C_{k+1}} u \right|^{\rr} \\[6pt]+  C (n - k + 1)^{\rr-1 - \xi} \Big(2^{-(d-p) k} I_\delta(u, \C_k \cup \C_{k+1}) + \delta^p \Big)^{a \rr/p}  \left( \dashint_{\C_k \cup \C_{k+1}}|u|^q\right)^{\frac{(1-a) \rr}{q}}. 
\end{multline}
Recall that, for $k \le n$ and $\xi >0$,  
\begin{equation}\label{CKN-part3-2}
\frac{1}{(n - k + 1)^{\xi}} - \frac{1}{(n - k + 3/2)^{\xi}} \sim \frac{1}{(n - k + 1)^{\xi + 1}}. 
\end{equation}
Taking $\xi = \rr -1 $, we derive from \eqref{CKN-part2-2} and \eqref{CKN-part3-2} that
\begin{equation}\label{CKN-part4-2}
\sum_{k = m}^n 2^{(\gamma \rr + d)k} \frac{1}{(n-k+1)^{\rr}}  \left| \dashint_{\C_{k}} u \right|^\rr \le \sum_{k = m}^n  C \Big( I_\delta(k, u) \Big)^{a \rr/p} \| |x|^\beta u   \|_{L^q(\C_k \cup \C_{k+1})}^{(1-a)\rr}. 
\end{equation}
Combining \eqref{CKN-part1-2} and \eqref{CKN-part4-2}, as in \eqref{CKN-part4}, we obtain  
\begin{equation*}
\int_{\{ |x|  > 2^{m} \}} \frac{|x|^{\gamma \rr}}{\ln^{\rr} (2^{n+1}/|x|)}  |u|^\rr \, dx  \le C  \left( \sum_{k = m}^n I_\delta(k, u)  \right)^{a \rr /p} \| |x|^\beta u   \|_{L^q(\R^d)}^{(1-a)\rr}.
\end{equation*}

\medskip
\noindent
$\bullet$ {\bf Step 4}: Proof of $iv)$. The proof is in the spirit of the proof of $iv)$ of Theorem~\ref{thm-Hardy}. The details are left to the reader.

\medskip
The proof is complete. 
\end{proof}

\begin{remark} \label{rem-CKN-g} \rm For $p> 1$, we have (see \cite[Theorem 4]{nguyen06})
$$
I_\delta(k, u) \le C \int_{\C_k \cup \C_{k+1}} |x|^{p \alpha} |\nabla u |^p \, dx \mbox{ for } k \in \Z, 
$$
for some positive constant $C$ independent of $k$ and $u$. This implies 
$$
\left( \sum_{k = m-1}^n I_\delta(k, u) \right)^{1/p} \le C \| |x|^\alpha \nabla u \|_{L^p(\R^d)}. 
$$
From Theorem~\ref{CKN-g}, one then obtains improvement of Caffarelli-Kohn-Nirenberg's  inequality for the case $0 \le \alpha - \sigma \le 1$ and for $1 < p < d$. 
\end{remark}

Using Theorem~\ref{CKN-g}, we can derive that 

\begin{proposition}\label{CKN-g-1} Let $p \ge 1$, $q \ge 1$, $\rr>0$, $0 <   a  <  1$, $\alpha, \, \beta, \, \gamma \in \R$ be such that 
\begin{equation*}
\frac{1}{\rr} + \frac{\gamma}{d} = a \Big(\frac{1}{p} + \frac{\alpha - 1}{d} \Big) + (1-a) \Big( \frac{1}{q} + \frac{\beta}{d} \Big), 
\end{equation*}
and, with $\gamma = a \sigma + (1 -a) \beta$, 
\begin{equation*}
 \alpha - \sigma > 1   \quad \mbox{ and } \quad 
\frac{1}{\rr} + \frac{\gamma}{d}  \neq  \frac{1}{p} + \frac{\alpha - 1}{d}. 
\end{equation*}
We have, for $u \in C^1_c(\R^d)$, 
\begin{enumerate}

\item[i)] if $1/ \rr + \gamma/ d > 0$,  then 
\begin{equation*}
\Big(\int_{\R^d}  |x|^{\gamma \rr}|u|^{\rr} \, dx \Big)^{1/\rr} \le C \| |x|^\alpha \nabla u \|_{L^p(\R^d)}^{a} \| |x|^\beta u   \|_{L^q(\R^d)}^{(1-a)},
\end{equation*}

\item[ii)] if $1/ \rr + \gamma/ d <  0$ and  $\supp u \subset \R^d \setminus \{0 \}$,  then 
\begin{equation*}
\Big(\int_{\R^d}  |x|^{\gamma \rr}|u|^{\rr} \, dx \Big)^{1/\rr} \le C \| |x|^\alpha \nabla u \|_{L^p(\R^d)}^{a} \| |x|^\beta u   \|_{L^q(\R^d)}^{(1-a)},
\end{equation*}
\end{enumerate}
for some positive constant $C$ independent of $u$. 
\end{proposition}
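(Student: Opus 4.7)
The plan is to deduce Proposition~\ref{CKN-g-1} from Theorem~\ref{CKN-g} by Hölder interpolation in the weighted Lebesgue scale, leveraging the strict sub-critical condition $\tfrac{1}{\rr} + \tfrac{\gamma}{d} \neq \tfrac{1}{p} + \tfrac{\alpha-1}{d}$. Together with the scaling identity, this gap forces $a < 1$ and $\tfrac{1}{q} + \tfrac{\beta}{d} \neq \tfrac{1}{p} + \tfrac{\alpha-1}{d}$, so the two natural endpoint scales are genuinely distinct and room remains for a non-trivial interpolation.

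Concretely, I would introduce intermediate parameters $(\tilde\rr, \tilde\gamma, \tilde a)$ admissible for Theorem~\ref{CKN-g} (i.e.\ satisfying the scaling relation together with $\alpha - \tilde\sigma \in [0, 1]$, where $\tilde\gamma = \tilde a\tilde\sigma + (1-\tilde a)\beta$) and combine the weighted Hölder estimate
\begin{equation*}
\||x|^\gamma u\|_{L^\rr(\R^d)} \,\le\, \||x|^{\tilde\gamma}u\|_{L^{\tilde\rr}(\R^d)}^{\theta}\, \||x|^\beta u\|_{L^q(\R^d)}^{1-\theta},
\end{equation*}
valid under $1/\rr = \theta/\tilde\rr + (1-\theta)/q$ and $\gamma = \theta\tilde\gamma + (1-\theta)\beta$, with the bound on the first factor supplied by Theorem~\ref{CKN-g}. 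Choosing $\theta \tilde a = a$ then produces the correct exponents $a$ and $1-a$ on $\||x|^\alpha \nabla u\|_{L^p}$ and $\||x|^\beta u\|_{L^q}$, respectively.

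The main obstacle will be arranging the intermediate $\tilde\sigma$ to lie inside $[\alpha - 1, \alpha]$: the Hölder relations together with $\theta\tilde a = a$ force $\tilde\sigma = \sigma$ by a short direct computation, which is incompatible with the CKN-g admissibility $\alpha - \tilde\sigma \in [0, 1]$ precisely when $\alpha - \sigma > 1$. Overcoming this requires going beyond a naive two-term Hölder: I anticipate the correct scheme either interpolates through a pair of Theorem~\ref{CKN-g} inequalities at different admissible endpoints, or invokes a substitution of the form $v = |u|^k$ with a suitable $k > 1$ shifting the effective $\alpha - \sigma$ down into the CKN-g regime. The strict sub-critical gap provides exactly the quantitative slack that makes such a choice available. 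Cases $(i)$ and $(ii)$ are distinguished by the sign of $\tfrac{1}{\rr} + \tfrac{\gamma}{d}$, which dictates the direction of the interpolation; in case $(ii)$, the support condition $\supp u \subset \R^d \setminus \{0\}$ ensures the integrability of the auxiliary weights near the origin.
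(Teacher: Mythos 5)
You correctly identify the central obstruction: the H\"older relations together with $\theta\tilde a = a$ force $\tilde\sigma = \sigma$, which violates the admissibility window $\alpha - \tilde\sigma \le 1$ of Theorem~\ref{CKN-g}. Your instinct that a pair of applications of Theorem~\ref{CKN-g} at different admissible endpoints is the way out is also right. But the proposal stops there, and the mechanism you gesture at --- a single two-factor weighted H\"older reproducing the exact exponents $a$ and $1-a$ on the right-hand side --- is not what makes the argument close.

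The paper first normalizes $\| |x|^\alpha\nabla u\|_{L^p(\R^d)} = \| |x|^\beta u\|_{L^q(\R^d)} = 1$; this is possible precisely because $\tfrac1\rr + \tfrac\gamma d \ne \tfrac1p + \tfrac{\alpha-1}d$ together with the balance identity gives $\tfrac1p + \tfrac{\alpha-1}d \ne \tfrac1q + \tfrac\beta d$, so a dilation and a multiplicative rescaling act independently on the two norms. The target then becomes simply $\| |x|^\gamma u\|_{L^\rr(\R^d)} \le C$. The paper then splits $\R^d$ into $B_1$ and $\R^d\setminus B_1$ and on each piece applies a one-term power-weight H\"older, $\| |x|^\gamma u\|_{L^\rr(\Omega)} \le C \| |x|^{\gamma_2}u\|_{L^{\rr_2}(\R^d)}$, against an intermediate admissible triple built with $\sigma_2 = \alpha - 1$ (so $\alpha-\sigma_2 = 1$, at the admissible boundary) and $a_2$ near $a$: since $a>0$ and $\alpha-\sigma > 1$, one has $\tfrac1\rr > \tfrac1{\rr_2}$, and choosing $a_2$ slightly below or slightly above $a$ (according to the sign of $\tfrac1p + \tfrac{\alpha-1}d - \tfrac1q - \tfrac\beta d$) makes the leftover weight $|x|^{\gamma-\gamma_2}$ integrable at infinity, respectively at the origin, while keeping $\tfrac1{\rr_2}+\tfrac{\gamma_2}d$ of the same sign as $\tfrac1\rr + \tfrac\gamma d$ so the correct branch of Theorem~\ref{CKN-g} applies. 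None of these ingredients --- normalization, the split of $\R^d$, the choice $\sigma_2 = \alpha-1$, the two nearby values of $a_2$ --- appears in your proposal, and without them the argument does not close; the $|u|^k$ substitution you float as an alternative is not used and is not needed.
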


\begin{proof}
 The proof is in the spirit of the approach in \cite{CKN} (see also \cite{NgS3}). Since 
$$
\frac{1}{p} + \frac{\alpha -1 }{d} \neq \frac{1}{q} + \frac{\beta}{d}. 
$$
by scaling, one might assume that 
$$
 \||x|^\alpha \nabla u \|_{L^p(\R^d)}  = 1 \quad \mbox{ and } \quad \| |x|^\beta u \|_{L^q(\R^d)} = 1. 
$$
Let $0 < a_2 < 1$ be such that 
\begin{equation}\label{CKN-2-cond0}
|a_2 -a| \mbox{ is small enough},
\end{equation}
 and set 
\begin{equation*}
\frac{1}{\rr_2}  = \frac{a_2}{p}  + \frac{1-a_2}{q}  \quad \mbox{ and } \quad \gamma_2 = a_2(\alpha-1) + (1 - a_2) \beta. 
\end{equation*}
We have 
\begin{equation}\label{CKN-2-part2}
\frac{1}{\rr_2} + \frac{\gamma_2}{d} = a_2 \Big( \frac{1}{p} + \frac{\alpha-1}{d}\Big) + (1 - a_2) \Big( \frac{1}{q} + \frac{\beta}{d}\Big).
\end{equation}
Recall that 
\begin{equation}\label{CKN-2-part3}
\frac{1}{\rr} + \frac{\gamma}{d} = a \Big( \frac{1}{p} + \frac{\alpha-1}{d}\Big) + (1 - a) \Big( \frac{1}{q} + \frac{\beta}{d}\Big).
\end{equation}
Since $a>0$ and $\alpha - \sigma >1 $,  it follows from \eqref{CKN-2-cond0} that
\begin{equation}\label{CKN-2-toto-2}
\frac{1}{\rr} - \frac{1}{\rr_2} = (a -a_2) \Big( \frac{1}{p} - \frac{1}{q}  \Big) + \frac{a}{d} (\alpha - \sigma - 1) > 0.
\end{equation}

We first  choose $a_2$  such  that 
\begin{equation}\label{CKN-2-cond1}
a_2 < a  \quad \mbox{ if } \quad \frac{1}{p} + \frac{\alpha -1}{d}  < \frac{1}{q} + \frac{\beta}{d},
\end{equation}
\begin{equation}\label{CKN-2-cond2}
a < a_2 \quad \mbox{ if } \quad \frac{1}{p} + \frac{\alpha -1}{d}  > \frac{1}{q} + \frac{\beta}{d}.
\end{equation}
Using \eqref{CKN-2-cond0},  \eqref{CKN-2-cond1} and \eqref{CKN-2-cond2}, we derive from  \eqref{CKN-2-part2}, and \eqref{CKN-2-part3} that
\begin{equation}\label{CKN-2-choice2}
 \frac{1}{\rr} + \frac{\gamma}{d}  < \frac{1}{\rr_2} + \frac{\gamma_2}{d} \quad \mbox{ and } \quad \left( \frac{1}{\rr} + \frac{\gamma}{d}  \right) \left( \frac{1}{\rr_2} + \frac{\gamma_2}{d} \right) > 0.  
\end{equation}
It follows from \eqref{CKN-2-toto-2}, \eqref{CKN-2-choice2}, and H\"older's inequality that 
\begin{equation*}
\| |x|^\gamma u \|_{L^\rr(\R^d \setminus B_1)} \le C \| |x|^{\gamma_2} u \|_{L^{\rr_2}(\R^d)}.  
\end{equation*}
Applying Theorem~\ref{CKN-g} (see also Remark~\ref{rem-CKN-g}), we have 
\begin{equation*}
 \| |x|^{\gamma_2} u \|_{L^{\rr_2}(\R^d)} \le C  \||x|^\alpha \nabla u \|_{L^p(\R^d)}^{a_2} \| |x|^\beta u   \|_{L^q(\R^d)}^{(1-a_2)}  \le C, 
\end{equation*}
which yields 
\begin{equation}\label{coucoucou1}
\| |x|^\gamma u \|_{L^\rr(\R^d \setminus B_1)}  \le C. 
\end{equation}

We next  choose $a_2$ such  that 
\begin{equation}\label{CKN-2-cond1-1}
a < a_2  \quad \mbox{ if } \quad \frac{1}{p} + \frac{\alpha -1}{d}  < \frac{1}{q} + \frac{\beta}{d},
\end{equation}
\begin{equation}\label{CKN-2-cond2-1}
a_2 < a \quad \mbox{ if } \quad \frac{1}{p} + \frac{\alpha -1}{d}  > \frac{1}{q} + \frac{\beta}{d}.
\end{equation}
Using \eqref{CKN-2-cond0},  \eqref{CKN-2-cond1-1} and \eqref{CKN-2-cond2-1}, we derive from  \eqref{CKN-2-part2}, and \eqref{CKN-2-part3} that
\begin{equation}\label{CKN-2-choice2-1}
\frac{1}{\rr_2} + \frac{\gamma_2}{d}  < \frac{1}{\rr} + \frac{\gamma}{d}  \quad \mbox{ and } \quad \left( \frac{1}{\rr} + \frac{\gamma}{d}  \right) \left( \frac{1}{\rr_2} + \frac{\gamma_2}{d} \right) > 0.  
\end{equation}
It follows from \eqref{CKN-2-toto-2}, \eqref{CKN-2-choice2-1}, and H\"older's inequality that 
\begin{equation*}
\| |x|^\gamma u \|_{L^\rr(B_1)} \le C \| |x|^{\gamma_2} u \|_{L^{\rr_2}(\R^d)}.  
\end{equation*}
Applying Theorem~\ref{CKN-g} (see also Remark~\ref{rem-CKN-g}), we have 
\begin{equation*}
 \| |x|^{\gamma_2} u \|_{L^{\rr_2}(\R^d)} \le C  \||x|^\alpha \nabla u \|_{L^p(\R^d)}^{a_2} \| |x|^\beta u   \|_{L^q(\R^d)}^{(1-a_2)}  \le C, 
\end{equation*}
which yields 
\begin{equation}\label{coucoucou2}
\| |x|^\gamma u \|_{L^\rr(\R^d \setminus B_1)}  \le C. 
\end{equation}
The conclusion now follows from \eqref{coucoucou1} and \eqref{coucoucou2}. 
\end{proof}

\begin{remark} \label{rem-Maximal} \rm 

Using the approach in the proof of \cite[Theorem 2]{nguyen06}, one can prove that, for $p>1$,  
\begin{equation}\label{rem-claim0}
I_\delta(u, \alpha) \le C \int_{\R^d} \int_{\mS^{d-1}}|x|^{p \alpha} 
|{\mathcal M}(\sigma, \nabla u) (x)|^p \, d \sigma \, dx, 
\end{equation}
where 
$$
{\mathcal M}(\sigma, \nabla u) (x): = 
\sup_{r > 0} \frac{1}{r}\int_0^r | \nabla u (x + s \sigma) \cdot \sigma| \, ds. 
$$
We claim that, for $-1/ p < \alpha < 1 - 1/ p$, it holds
\begin{equation}\label{rem-claim}
\int_{\R^d}|x|^{p \alpha} |{\mathcal M}(\sigma, \nabla u) (x)|^p \, d \sigma \, dx \le C \int_{\R^d}|x|^{p \alpha} |\nabla u (x) \cdot \sigma|^p \, dx,\quad 
\mbox{ for all } \sigma \in \mS^{d-1}, 
\end{equation}
for some positive constant $C$ independent of $\sigma$ and $u$. Then, combining \eqref{rem-claim0} and \eqref{rem-claim} yields 
\begin{equation}
I_\delta (u, \alpha) \le C \int_{\R^d} |x|^{p \alpha} |\nabla u|^p \, dx. 
\end{equation}
as mentioned in Remark~\ref{rem-Muck}.
For simplicity, we assume that 
$\sigma = e_d = (0, \cdots, 0, 1) \in \R^d$ and prove \eqref{rem-claim}.  We have, for any bounded interval $(a, b)$ and for any $x' \in \R^{d-1}$ 
\begin{equation}
\label{weightass}
\dashint_{a}^b (|x'| + |s|)^{p \alpha}  \, ds \left( \dashint_{a}^b (|x'| + |s|)^{- p \alpha/ (p-1)} \, ds  \right)^{p-1} \le C, 
\end{equation}
for some positive constant $C$ independent of $(a, b)$ and $x'$ since $-1/ p < \alpha < 1 - 1/ p$.  Applying the theory of maximal functions with weights due to 
Muckenhoupt \cite[Corollary 4]{Muck} (see also \cite[Theorem 1]{CF}), 
which holds whenever the weight satisfies \eqref{weightass}, we obtain 
\begin{align*}
\int_{\R^d} |x |^{p \alpha} |{\mathcal M}(e_d, \nabla u) (x )|^p \, dx &\le  C  \int_{\R^{d-1}}\int_{\R} (|x' | + |x_d|)^{p \alpha} |{\mathcal M}(e_d, \nabla u) (x', x_d )|^p \, dx_d \, d x' \\[6pt]
& \le  C \int_{\R^{d-1}}\int_{\R} (|x' | + |x_d|)^{p \alpha} |\partial_{x_d} u (x', x_d ) |^p \, dx_d \, d x' \\
& \le C \int_{\R^d} |x|^{p \alpha} |\nabla u |^p \, dx. 
\end{align*}
The claim \eqref{rem-claim} is proved.

\end{remark}

\section{Results in bounded domains}
\label{boundedcase}

In this section, we present some results in the spirit of Theorems~\ref{thm-Hardy} and \ref{CKN-g} for a smooth bounded domain $\Omega$.  As a consequence of Theorem~\ref{thm-Hardy} and the extension argument in the proof of Lemma~\ref{lem-S}, we obtain 

\begin{proposition}\label{pro-Hardy-domain} Let $d \ge 1$, $1 \le p \le d$, $\Omega \Subset B_R$ a smooth open subset of $\R^d$, and $u \in L^p(\Omega)$. We have
\begin{enumerate}

\item[i)]  if $1 \le p < d$, then 
\begin{equation*}
\int_{\Omega} \frac{|u(x)|^p}{|x|^p} \, dx \le C_\Omega \left(I_\delta(u, \Omega) + \| u \|_{L^p(\Omega)}^p  + \delta^p \right), 
\end{equation*}

\item[ii)] if $p > d$ and $\supp u \subset \bar \Omega \setminus B_r$, then
\begin{equation*}
\int_{\Omega} \frac{|u(x)|^p}{|x|^p} \, dx \le C_\Omega \left(I_\delta(u, \Omega)  + \| u \|_{L^p(\Omega)}^p + r^{d-p} \delta^p  \right), 
\end{equation*}

\item[iii)] if $p = d \ge 2$, then
\begin{equation*}
\int_{\Omega \setminus B_r} \frac{|u(x)|^d}{|x|^d \ln^d (2R/ |x|)} \, dx \le C_\Omega \left(I_\delta(u, \Omega) + \| u \|_{L^p(\Omega)}^p +   \ln (2R/ r) \delta^d  \right), 
\end{equation*}

\item[iv)] if $p = d \ge 2$ and $\supp u \subset \Omega \setminus B_r$, then
\begin{equation*}
\int_{\Omega \cap B_R} \frac{|u(x)|^d}{|x|^d \ln^d (2|x|/r)} \, dx \le C_\Omega \left(I_\delta(u, \Omega) + \| u \|_{L^p(\Omega)}^p +   \ln (2R/ r) \delta^d  \right),  
\end{equation*}
 
\end{enumerate}
Here  $C_\Omega$ denotes a positive constant depending only on $p$ and $\Omega$. 

\end{proposition}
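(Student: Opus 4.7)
My plan is to reduce Proposition~\ref{pro-Hardy-domain} to the whole-space statements of Theorem~\ref{thm-Hardy} by extending $u$ from $\Omega$ to a compactly supported function $v$ on $\mathbb{R}^d$, using the construction already deployed in the proof of Lemma~\ref{lem-S}.

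Since $\Omega\Subset B_R$ is smooth, I first pick $\tau>0$ small enough that $\Omega_\tau\Subset B_R$ and take the extension $U\in L^p(\Omega_\tau)$ provided by \cite[Lemma 17]{BHN}; choosing a cutoff $\varphi\in C^1(\R^d)$ with $\supp\varphi\subset\Omega_{2\tau/3}$ and $\varphi\equiv 1$ on $\Omega_{\tau/3}$, the product $v:=\varphi U$ lies in $L^p(\R^d)$, coincides with $u$ on $\Omega$, is supported in $B_R$, and satisfies
\begin{equation*}
I_{2\delta}(v) \le C_\Omega\bigl(I_\delta(u,\Omega)+\|u\|_{L^p(\Omega)}^p\bigr), \qquad \|v\|_{L^p(\R^d)}\le C_\Omega\|u\|_{L^p(\Omega)},
\end{equation*}
by repeating verbatim the argument that proves claim \eqref{lem-S-claim}.

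For items (i) and (iii), I would simply apply Theorem~\ref{thm-Hardy}(i), respectively (iii), to $v$ with $2\delta$ in place of $\delta$; combined with the pointwise bound $|u|/|x|\le |v|/|x|$ on $\Omega$ and the extension estimate above, this yields the desired inequalities. For items (ii) and (iv) the extension $v$ need not vanish near the origin, so I would introduce a further cutoff $\eta\in C^\infty(\R^d)$ with $\eta\equiv 0$ on $B_{r/2}$, $\eta\equiv 1$ on $\R^d\setminus B_r$, and $|\nabla\eta|\le C/r$, and work with $w:=\eta v$. Since $u$ vanishes on $\Omega\cap B_r$ by assumption, $w=u$ on $\Omega$ and $\supp w\subset B_R\setminus B_{r/2}$; writing $w(x)-w(y)=\eta(x)(v(x)-v(y))+v(y)(\eta(x)-\eta(y))$ and estimating as in \eqref{lem-S-part4}--\eqref{lem-S-part6} gives
\begin{equation*}
I_{4\delta}(w)\le C\bigl(I_{2\delta}(v)+r^{-p}\|v\|_{L^p(\R^d)}^p\bigr).
\end{equation*}
Applying Theorem~\ref{thm-Hardy}(ii), respectively (iv), to $w$ with $r/2$ in place of $r$ then delivers (ii) and (iv).

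\textbf{Main obstacle.} The genuine difficulty is twofold. First, the extension estimate is nontrivial and must be reproduced carefully: this is the content of \eqref{lem-S-claim}, whose proof balances the boundary behaviour of $U$ against the Lipschitz defect of $\varphi$ via the dichotomy on $|v(x)-v(y)|>2\delta$. Second, for (ii) and (iv) the second cutoff $\eta$ introduces an apparently unwanted factor $r^{-p}\|u\|_{L^p(\Omega)}^p$ that is not literally among the terms on the right-hand side; one reconciles this by splitting into the regime where this term dominates, handled by the trivial pointwise bound $|u(x)|/|x|\le |u(x)|/r$ valid on $\supp u$, and the regime where $r$ is bounded below, where the constant $C_\Omega$ harmlessly absorbs the factor. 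Apart from these points, every step is a direct reduction to the already-proved Theorem~\ref{thm-Hardy}.
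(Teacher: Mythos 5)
Your overall strategy matches the paper exactly: extend $u$ from $\Omega$ to a function $v=\varphi U$ supported in $B_R$ using \cite[Lemma 17]{BHN} plus a cutoff as in the proof of Lemma~\ref{lem-S}, establish $I_{2\delta}(v)\le C_\Omega(I_\delta(u,\Omega)+\|u\|_{L^p(\Omega)}^p)$ by reproducing the argument for \eqref{lem-S-claim}, and apply Theorem~\ref{thm-Hardy}. For parts (i) and (iii) this is all correct.

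For parts (ii) and (iv), however, the second cutoff $\eta$ at scale $r$ is both unnecessary and harmful, and your proposed repair of the resulting $r^{-p}\|u\|_{L^p(\Omega)}^p$ term does not close. The key observation you missed is that the extension already respects the vanishing near the origin: since $U|_\Omega=u$ and $u\equiv 0$ on $\Omega\cap B_r$, one has $U\equiv 0$ on $\Omega\cap B_r$, and hence $v=\varphi U\equiv 0$ on $B_r$ whenever $B_r\subset\Omega$, i.e.\ whenever $r<\operatorname{dist}(0,\partial\Omega)$. In that regime $\supp v\subset \overline{B_R}\setminus B_r$ and Theorem~\ref{thm-Hardy}(ii), resp.\ (iv), applies to $v$ directly without any extra truncation, giving precisely the stated right-hand side. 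When $r\ge\operatorname{dist}(0,\partial\Omega)$ (or when $0\notin\Omega$), the pointwise bound $|x|^{-p}\le r^{-p}\le C_\Omega$ on $\supp u$ gives the conclusion trivially, with the $r$-dependence absorbed into $C_\Omega$. Your two ``regimes'' (``where this term dominates'' vs.\ ``where $r$ is bounded below'') are not complementary, and in the first one the pointwise bound produces $r^{-p}\|u\|_{L^p(\Omega)}^p$ on the left which is \emph{not} controlled by $\|u\|_{L^p(\Omega)}^p+r^{d-p}\delta^p$ when $r$ is small; so as written the argument has a genuine gap. Replacing the $\eta$-truncation by the observation above repairs it cleanly and brings the proof back in line with what the paper intends by ``the extension argument in the proof of Lemma~\ref{lem-S}''.
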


Using Theorem~\ref{thm-CKN}, we derive 

\begin{proposition}\label{pro-CKN-domain} Let $d \ge 2$, $1 < p < d$, $q \ge 1$, $\rr>0$, $0 <   a \le   1$, $\alpha, \, \beta, \, \gamma \in \R$, $0 \in \Omega  \subset B_R$ a smooth bounded open subset of $\R^d$, and $u \in L^p(\Omega)$ be such that
\begin{equation*}
\frac{1}{\rr} + \frac{\gamma}{d} = a \Big(\frac{1}{p} + \frac{\alpha - 1}{d} \Big) + (1-a) \Big( \frac{1}{q} + \frac{\beta}{d} \Big), 
\end{equation*}
and, with $\gamma = a \sigma + (1 -a) \beta$, 
\begin{equation*}
0 \le  \alpha - \sigma \le 1.   
\end{equation*}
We have
\begin{enumerate}

\item[i)] if $1/ \rr + \gamma/ d > 0$,  then 
\begin{equation*}
\left(\int_{\Omega}  |x|^{\gamma \rr}|u|^{\rr} \, dx \right)^{1/\rr} \le C\Big(  I_\delta (u, \Omega, \alpha)  + \| u\|_{L^p(\Omega)}^p + \delta^p \Big)^{a/p} \| |x|^\beta u   \|_{L^q(\Omega)}^{(1-a)}, 
\end{equation*}

\item[ii)] if $1/ \rr + \gamma/ d <  0$ and  $\supp u \subset \Omega \setminus \{ 0 \} $,  then 
\begin{equation*}
\left(\int_{\Omega}   |x|^{\gamma \rr} |u|^{\rr} \, dx \right)^{1/\rr} \le C\Big(   I_\delta (u, \Omega, \alpha)  + \| u\|_{L^p(\Omega)}^p   + \delta^p\Big)^{a/p} \| |x|^\beta u   \|_{L^q(\Omega)}^{(1-a)}, 
\end{equation*}

\item[iii)] if $1/ \rr + \gamma/ d = 0$ and $\rr > 1$,  then 
\begin{equation*}
\left(\int_{\Omega \setminus B_r}  \frac{ |x|^{\gamma \rr}}{\ln^{\rr}(2R/ |x|)} |u|^{\rr} \, dx \Big)^{1/\rr} \le C\Big( I_\delta (u, \Omega, \alpha)  + \| u\|_{L^p(\Omega)}^p   + \delta^p \ln (2 R/ r) \right)^{a/p} \| |x|^\beta u   \|_{L^q(\Omega)}^{(1-a)}, 
\end{equation*}

\item[iv)] if $1/ \rr + \gamma/ d = 0$, $\rr > 1$,  and $\supp u \subset \Omega \setminus B_r$,  then 
\begin{equation*}
\left(\int_{\Omega}  \frac{ |x|^{\gamma \rr}}{\ln^\rr(2 |x|/ r)} |u|^{\rr} \, dx \right)^{1/\rr} \le C\left( I_\delta (u, \Omega, \alpha)  + \| u\|_{L^p(\Omega)}^p   + \delta^p \ln (2 R/ r) \right)^{a/p} \| |x|^\beta u   \|_{L^q(\Omega)}^{(1-a)}.
\end{equation*}

\end{enumerate}
Here  $C$  denotes a positive constant  independent of $u$ and $\delta$.  

\end{proposition}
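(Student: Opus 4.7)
The strategy is to reduce Proposition~\ref{pro-CKN-domain} to the global Theorem~\ref{CKN-g} through a controlled extension from $\Omega$ to $\R^d$, exactly in the spirit of the proof of Lemma~\ref{lem-S}. Since $0$ lies in the interior of $\Omega$, pick $\tau > 0$ small enough that the layer $\Omega_\tau \setminus \Omega$ is bounded and bounded away from $0$; on this layer the weights $|x|^{p\alpha}$ and $|x|^{q\beta}$ are then comparable to positive constants. Using \cite[Lemma 17]{BHN}, I would extend $u$ to $U \in L^p(\Omega_\tau)$ satisfying $I_\delta(U,\Omega_\tau) \le C\, I_\delta(u,\Omega)$ and $\|U\|_{L^p(\Omega_\tau)} \le C\|u\|_{L^p(\Omega)}$, and set $v := \varphi U$ for a cutoff $\varphi \in C^1(\R^d)$ with $\varphi \equiv 1$ on $\Omega_{\tau/3}$ and $\supp \varphi \subset \Omega_{2\tau/3}$. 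Then $v = u$ on $\Omega$, $v \in L^p(\R^d)$ has compact support in a ball $B_{R+\tau}$, and $v$ is the candidate to feed into Theorem~\ref{CKN-g}.

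The analytic core of the argument is the weighted analogue of claim \eqref{lem-S-claim}, namely
\begin{equation*}
I_{2\delta}(v,\alpha) \le C\Big(I_\delta(u,\Omega,\alpha) + \|u\|_{L^p(\Omega)}^p + \delta^p\Big),
\end{equation*}
which I would establish by running the same four-piece decomposition used in Lemma~\ref{lem-S} with the extra factor $|x|^{p\alpha}$ inserted. The piece over $\Omega \times \Omega$ contributes $I_\delta(u,\Omega,\alpha)$; the pieces supported where at least one coordinate lies in $\Omega_\tau \setminus \Omega$ carry a bounded weight and therefore inherit the unweighted bounds of Lemma~\ref{lem-S} up to multiplicative constants; the tail piece over $\{|x-y|>\tau/4\}$ yields the $C\delta^p$ term. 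A parallel argument, exploiting that $U$ reads values of $u$ only in a neighborhood of $\partial\Omega$ (hence in a region where both $|x|^\beta$ and $|x|^{-\beta}$ are bounded), gives
\begin{equation*}
\||x|^\beta v\|_{L^q(\R^d)} \le C \||x|^\beta u\|_{L^q(\Omega)}.
\end{equation*}

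With these two estimates in hand, Theorem~\ref{CKN-g} applied to $v$ closes the argument. For case i), $\supp v \subset B_{2^n}$ with $2^n \sim R+\tau$; taking $m$ with $2^m$ smaller than any relevant inner scale of $v$ makes $\sum_k I_{2\delta}(k,v)$ dominated by $I_{2\delta}(v,\alpha)$ plus a convergent tail in $\delta^p$, and restricting the resulting inequality to $\Omega$ (where $v=u$) yields the claim. Case ii) uses that $\supp u \subset \Omega \setminus \{0\}$ combined with closedness of $\supp u$ in the compact set $\overline{\Omega}$ forces $\supp u \subset \overline{\Omega}\setminus B_{r_0}$ for some $r_0 > 0$, whence $\supp v \subset \R^d \setminus B_{r_0/2}$ and Theorem~\ref{CKN-g}(ii) applies. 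Cases iii) and iv) proceed analogously with $2^n \sim R+\tau$ and $2^m \sim r$; the sum $\sum_{k=m-1}^n I_{2\delta}(k,v)$ produces the $(n-m+2)\delta^p \sim \ln(2R/r)\delta^p$ factor of the proposition, and the mismatch between the logarithmic factors $\ln(2^{n+1}/|x|)$ and $\ln(2R/|x|)$ is absorbed by a direct comparison on $[r, R)$.

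The main technical obstacle is the weighted bound for $I_{2\delta}(v,\alpha)$: one must carefully track the mixed contributions in which one of $x,y$ lies near the origin (where $|x|^{p\alpha}$ may be singular) while the other sits in the extension region. For such pairs, however, $|x-y|$ is bounded below by a positive constant depending on $\mathrm{dist}(0,\partial\Omega)$ and $\tau$, so the kernel $|x-y|^{-d-p}$ is harmless, and these mixed terms contribute only $C(\|u\|_{L^p(\Omega)}^p + \delta^p)$ to the overall bound. Beyond this bookkeeping, the proof is a weighted replay of Lemma~\ref{lem-S} followed by a direct application of Theorem~\ref{CKN-g}.
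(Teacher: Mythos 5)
Your proposal is correct and follows the same route as the paper: extend $u$ to $v = \varphi U$ via the Lemma~\ref{lem-S} extension, establish the weighted analogue $I_{2\delta}(v,\alpha)\le C\big(I_\delta(u,\Omega,\alpha)+\|u\|_{L^p(\Omega)}^p+\delta^p\big)$ and $\||x|^\beta v\|_{L^q(\R^d)}\le C\||x|^\beta u\|_{L^q(\Omega)}$ using that $0\in\Omega$ keeps the weights bounded on the extension layer, then feed $v$ into Theorem~\ref{CKN-g}. The paper's proof is exactly this, stated more tersely; your additional bookkeeping (mixed near-origin/boundary pairs controlled via $|x-y|\gtrsim\mathrm{dist}(0,\partial\Omega)$, and the dyadic matching $2^n\sim R$, $2^m\sim r$ producing the $\ln(2R/r)$ factor) fills in the details the authors leave implicit.
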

\begin{proof} Let $v$ be the extension of $u$ in $\R^d$ as in the proof of Lemma~\ref{lem-S}. As in the proof of Lemma~\ref{lem-S},  we have, since $0 \in \Omega$, 
	$$
	I_{2\delta}(v, \alpha) \le C \Big( I_\delta(u, \Omega, \alpha) + \| u\|_{L^p(\Omega)} \Big). 
	$$
	We also have, since $0 \in \Omega$,  
	$$
	\| |x|^\beta v   \|_{L^q(\R^d)} \le C \| |x|^\beta u   \|_{L^q(\Omega)}.
	$$
	The conclusion now follows from Theorem~\ref{CKN-g}. 
\end{proof}

\vskip20pt

\bigskip

\end{document}